\theoremstyle{plain}
\newtheorem{theorem}{Theorem}[section]
\newtheorem{proposition}[theorem]{Proposition}
\newtheorem{corollary}[theorem]{Corollary}
\newtheorem{lemma}[theorem]{Lemma}
\theoremstyle{definition}
\newtheorem{remark}[theorem]{Remark}
\newtheorem{example}[theorem]{Example}
\newtheorem{problem}[theorem]{Problem}
\newcommand{\abs}[1]{\lvert#1\rvert}
\newcommand{\norm}[1]{\lVert#1\rVert}
\newcommand{\N}{{\mathbb N}}
\newcommand{\ol}{\overline}
\DeclareMathOperator{\Span}{Span}
\DeclareMathOperator{\supp}{supp}
\def\one{\mathbb 1}
\author[N.~Gao]{Niushan Gao}
\address{Department of Mathematics and Computer Science, 
University of Lethbridge, Lethbridge, Canada T1K 3M4}
\email{gao.niushan@uleth.ca}
\author[D.~Leung]{Denny H.~Leung}
\address{Department of Mathematics, National University of Singapore, Singapore
117543}
\email{matlhh@nus.edu.sg}
\keywords{Order closed sublattice, option spanning, uo-closure, order closure}
\subjclass[2010]{46A40, 06F30, 54F05}
\thanks{The first author is a PIMS Postdoctoral Fellow. He also acknowledges 
support from  the National Natural Science Foundation of China (No.~11601443).
The second author is partially supported by AcRF grant R-146-000-242-114.}
\date{\today}
\begin{document}

\title[Smallest order closed sublattices and option spanning]{Smallest order
closed sublattices and option spanning}
 \maketitle

\begin{abstract}
Let $Y$ be a sublattice of a vector lattice $X$.
We consider the problem of identifying the smallest order closed sublattice of
$X$ containing $Y$.  
It is known that the analogy with topological closure fails. Let $\overline{Y}^o$ be
the order closure of $Y$ consisting of all order limits of nets of
elements from $Y$.  
Then $\overline{Y}^o$ need not be order closed. 
We show that in many cases the smallest order closed sublattice containing $Y$
is in fact the 
second order closure $\overline{\overline{Y}^o}^o$.
Moreover, if $X$ is a $\sigma$-order complete Banach lattice, then the condition
that $\overline{Y}^o$ is order closed for every sublattice $Y$ characterizes order
continuity of the norm of $X$.
The present paper provides a general approach to  a fundamental result in
financial economics 
concerning the spanning power of options written on a financial asset.
\end{abstract}

\section{Introduction}
\subsection{Motivations}
Let $\Omega$ be a finite set standing for the state space of a static financial
market at the terminal date. 
A financial asset in the market is represented by a function $f$ on $\Omega$.
The call
(respectively, put) option written on an asset $f$ with strike price
$k\in\mathbb{R}$ can be represented as 
$(f-k\one)^+$ (respectively, $(k\one-f)^+$). Here $\one$ denotes the constant
one function on $\Omega$.
In the seminal paper \cite{R:76}, Ross showed that, if the underlying asset
separates
states of the market at the terminal date, then options on this asset generate
complete
markets, i.e., every contingent claim is replicated by a portfolio of some call
and put options on this asset. Mathematically speaking, it means that, for any
injective
function $f\in \mathbb{R}^\Omega$,
$$\mathbb{R}^\Omega=\Span\big\{(f-k\one)^+,(k\one-f)^+:k\in\mathbb{R}\big\}.$$
This notion that options complete markets, pioneered by Ross, is at
the core of modern financial economics (\cite{BM:00}), and has been under
extensive exploration.

In particular, Ross's result has been extended to financial markets with
infinite state spaces. Let $(\Omega,\Sigma,\mathbb{P})$ be a probability space.
For an asset $f \in L^p(\Sigma)$, $1\leq p\leq \infty$, its \emph{option space}
is defined by 
$$O_f:=\Span\big\{(f-k\one)^+,(k\one-f)^+:k\in\mathbb{R}\big\}.$$
Through the work of Nachman \cite{N:88}, Galvani \cite{G:09}, and Galvani and
Troitsky \cite{GT:10}, it is established that, if $f$ is of \emph{limited
liability}, i.e., $f\geq 0$ a.s., then
$$\overline{O_f}^{a.s.}\cap
L^p(\Sigma)=\overline{O_f}^{\norm{\cdot}_p}=L^p(\sigma(f)),\mbox{ for }1\leq
p<\infty,$$
$$\overline{O_f}^{a.s.}\cap
L^\infty(\Sigma)=\overline{O_f}^{\sigma(L^\infty,L^1)}=L^\infty(\sigma(f)).$$
Here $\overline{O_f}^{a.s.}$ is the collection of all random variables that are
almost sure limits of sequences in $O_f$, and $\sigma(f)$ is the
$\sigma$-algebra generated by $f$. 
Recently, these results have been generalized in \cite{GX:17} to model spaces
beyond $L^p$, using the topology $\sigma(X,X_n^\sim)$, where
$X_n^\sim$ is order continuous
dual of $X$. 
Specifically, let $X$ be an ideal (i.e., solid subspace) of $L^0(\Sigma)$ that
contains the constant one function and admits a strictly positive order
continuous functional. Then for any limited liability asset $f\in X_+$, it holds
that
$$\overline{O_f}^{a.s.}\cap
X=\overline{O_f}^{\sigma(X,X_n^\sim)}=X(\sigma(f)).\eqno(*)$$
Here $X(\sigma(f))$ is the set of all random variables in $X$ that are
$\sigma(f)$-measurable, and is interpreted as
the collection of all financial claims written on the asset $f$, among which
options are obviously the  basic ones. 
Thus $(*)$ asserts that every claim written on the asset $f$ is the a.s.-limit of a sequence of portfolios of options on $f$.
It deserves mentioning that these
spanning properties of options played
a very useful role in the study of price extensions in
\cite{BR:91,GX:17,N:88}.

A fundamental fact used to prove $(*)$ is a beautiful theorem due to the
economists Brown and
Ross (\cite[Theorem (1)]{BR:91}), which asserts that, for any $0\leq s\leq b$ in
a uniformly complete vector lattice $X$,
$\Span\{(s-kb)^+,(kb-s)^+:k\in\mathbb{R}\}$ is the smallest sublattice of $X$
containing $s,b$. This implies in particular that the option space $O_f$  of any
limited liability asset $f$ is a sublattice (see Lemma~\ref{subl} below).
A close look at the proof of $(*)$ reveals that the terms in $(*)$
are precisely the smallest order closed sublattice of $X$ containing $O_f$. This
motivates us to investigate the smallest order closed sublattice containing a
given sublattice. Our study provides a general approach to the spanning power of
options.

The paper is structured as follows. In Section~\ref{main-result}, we prove that,
in many Banach lattices, the smallest order closed
sublattice containing a given sublattice $Y$ coincides with the uo-closure of
$Y$ as well as the second order closure of $Y$ (Theorem~\ref{uo-closure}).  It
is also shown
that if (and only if) the ({\em first}) order closure of any sublattice $Y$ in a
$\sigma$-order complete Banach lattice $X$ is order closed, then $X$ is order
continuous (Theorem~\ref{o-closure}).
On the other hand, Theorem~\ref{option} shows  that for a large class of Banach
function
spaces, the 
order closure of the option space $O_f$ is already order closed for all $f\geq 0$.  In a similar
vein, Theorem~\ref{o-closure-reg} shows that the order closure of any regular
sublattice of a
vector lattice is order closed.  These results show that the behavior of the
order closure of a sublattice can be quite subtle.
In Section~\ref{measurability}, we relate order closure to measurability,
following the approach
of Luxemburg and de Pagter \cite{DL:05,DL:05b}. 
Corollary~\ref{option-strong}
shows that options on a limited liability asset often have the strong
spanning power
that every  claim written on the asset is the \emph{order} limit of a
sequence of portfolios of options.

\subsection{Notation and Facts}
We adopt \cite{AB:06,AB:78} as standard references on unexplained terminology
and
facts
on vector and Banach lattices. For general facts about uo-convergence we refer
the reader to \cite{GTX:16} and the references therein. 
A net $(x_\alpha)_{\alpha\in\Gamma}$ in a vector
lattice $X$ is said to
\emph{order converge} to $x\in X$, written as $x_\alpha\xrightarrow{o}x$,
if there exists another net $(a_\gamma)_{\gamma\in \Lambda}$ in $X$ satisfying
$a_\gamma\downarrow 0$ and for any $\gamma\in\Lambda$ there exists $\alpha_0\in
\Gamma$ such that $\abs{x_\alpha-x}\leq a_\gamma$ for all $\alpha\geq
\alpha_0$; 
$(x_\alpha)$ is said to \emph{unbounded
order converge} (uo-converge for short) to $x\in X$, written as
$x_\alpha\xrightarrow{uo}x$, if $\abs{x_\alpha-x}\wedge y\xrightarrow{o}0$ for
any $y\in X_+$.
It is well known that, for a sequence $(f_n)$ in a  function space $X$,
$f_n\xrightarrow{o}0$ in $X$
iff $f_n\xrightarrow{a.s.}0$ and there exists $F\in X$ such that $\abs{f_n}\leq
F$ for all $n\geq 1$, and
$f_n\xrightarrow{uo}0$ in $X$ iff $f_n\xrightarrow{a.s.}0$.
Recall that a Banach lattice is \emph{order continuous} if
$\norm{x_\alpha}\rightarrow0$ whenever $x_\alpha\xrightarrow{o}0$. The
\emph{order continuous
dual} $X_n^\sim$ of a vector lattice $X$ is the collection of all linear
functionals $\phi$ which are order continuous, i.e., $\phi(x_\alpha)\rightarrow
0$
whenever
$x_\alpha\xrightarrow{o} 0$ in $X$. If $X$ is a Banach lattice, $X_n^\sim$ is a
band in $X^*$.
A \emph{Banach function space} over a probability space
$(\Omega,\Sigma,\mathbb{P})$ is an ideal of $L^0(\Sigma)$
with a complete norm such that $\norm{f}\leq \norm{g}$ whenever
$\abs{f}\leq\abs{g}$.
Every Banach function space has a separating order continuous dual
(\cite[Theorem~5.25]{AB:02}) and has \emph{the countable sup property}, i.e.,
every
set having a supremum admits a countable subset with the same supremum
(\cite[Lemma~2.6.1]{MN:91}).

Let $X$ be a vector lattice. For any $x,y\in X_+$, denote by $L_{x,y}$ the
smallest sublattice containing
$x,y$. Recall that Banach lattices and $\sigma$-order complete vector lattices
are
uniformly complete. Thus the following lemma applies to them.
\begin{lemma}\label{subl}
For any $x,y\geq 0$ in a uniformly complete vector lattice $X$, 
$$L_{x,y}=\Span\big\{(x-ky)^+,(ky-x)^+:k\in \mathbb{R}\big\}.$$
\end{lemma}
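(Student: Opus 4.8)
The plan is to prove the identity $L_{x,y}=\Span\{(x-ky)^+,(ky-x)^+:k\in\mathbb R\}$ by showing the right-hand side, call it $S$, is a sublattice containing $x$ and $y$, and that any sublattice containing $x$ and $y$ must contain $S$. The second inclusion is the easy one: a sublattice is a linear subspace closed under $\vee$ and $\wedge$, hence closed under $z\mapsto z^+=z\vee 0$; since $x,y\in L_{x,y}$, each $x-ky\in L_{x,y}$ and therefore $(x-ky)^+,(ky-x)^+\in L_{x,y}$, so $S\subseteq L_{x,y}$. Note also that $x=(x-0\cdot y)^+$ (using $x\ge 0$) and $y$ can be recovered since $(x-ky)^+-(ky-x)^+=x-ky$, so $y\in S$ once we know $S$ is a linear subspace (which it is by definition) and contains $x$; thus $x,y\in S$ and it only remains to show $S$ is a sublattice.

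The heart of the matter is therefore to show $S$ is closed under lattice operations, and since $a\wedge b=-((-a)\vee(-b))$ and $a\vee b=a+(b-a)^+$, it suffices to show that $z\in S\implies z^+\in S$. A generator $(x-ky)^+$ or $(ky-x)^+$ is already a positive element, so the real content is to handle linear combinations. The idea is to approximate: write a typical element of $S$ as a finite linear combination $z=\sum_i c_i\,g_i$ where each $g_i$ is of the form $(x-k_iy)^+$ or $(k_iy-x)^+$, and observe that on the ``pieces'' of the lattice cut out by the breakpoints $k_i$, the function $z$ behaves like an affine function of $x$ and $y$; one wants to say $z^+$ is again piecewise-affine in $x,y$ with breakpoints among the $k_i$ together with the zeros of the relevant affine pieces, hence lies in $S$. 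The clean way to make this rigorous in an abstract uniformly complete vector lattice is via a functional-representation / uniform-approximation argument: restrict attention to the (uniformly complete) principal ideal generated by $e:=x\vee y$, represent it as a dense sublattice of $C(K)$ for a compact Hausdorff $K$ (Kakutani/Krein–Krein, or the $f$-algebra functional calculus), transport the problem to $C(K)$ where $S$ becomes $\Span\{(\hat x-k\hat y)^+,(k\hat y-\hat x)^+\}$, and there prove the piecewise-linear claim pointwise and use uniform density to pull the lattice operation back. The uniform completeness hypothesis is exactly what licenses this functional calculus and guarantees the relevant suprema (finite ones) and uniform limits stay inside $X$.

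The main obstacle I anticipate is the piecewise-linear combinatorics: showing explicitly that $\max(0,z)$, for $z$ a finite linear combination of functions of the form $t\mapsto(t-k)^+$ and $t\mapsto(k-t)^+$ in a single real variable $t$ (after reducing to the two-variable case $t = x/y$ on $\{y>0\}$ and handling $\{y = 0\}$ separately, or more robustly working directly with the positively-homogeneous functions $(s,t)\mapsto(s-kt)^+$ on the cone), is again in the span of such functions. This is elementary but fiddly: a continuous piecewise-linear function of one variable that is eventually $0$ on the left and has finitely many breakpoints is a linear combination of $1$, $t$, and hinge functions $(t-k)^+$; one must track carefully that no stray constant or linear term survives, using that $(t - k)^+ + (k-t)^+ = |t - k|$ and $(t-k)^+-(k-t)^+=t-k$ to generate constants and the identity from the generators. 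A cleaner route avoiding explicit case analysis: first show $|z|\in S$ for $z\in S$ (equivalently $z^+\in S$) by inducting on the number of distinct breakpoints, at each step peeling off one hinge so that between consecutive breakpoints $z$ is affine and the sign of $z$ there is constant, then patching. I would expect the write-up to spend most of its length on this one-dimensional (or conic) lemma, with the abstract vector-lattice wrapper being short once uniform completeness and the functional calculus are invoked.
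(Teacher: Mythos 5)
Your proposal is correct in substance but takes a genuinely different --- and far more self-contained --- route than the paper. The paper's proof is two lines: observe that both sides of the identity are unchanged when $y$ is replaced by $x+y$ (which places one in the situation $0\leq s\leq b$ with $s=x$, $b=x+y$), and then invoke the cited theorem of Brown and Ross, which says precisely that $\Span\{(s-kb)^+,(kb-s)^+:k\in\mathbb{R}\}$ is the smallest sublattice containing $s,b$ when $0\leq s\leq b$. What you outline is essentially a proof of the Brown--Ross theorem itself (in fact of a slightly more general version, since you never need $x\leq y$): reduce to showing that $S$ is a sublattice, pass to the principal ideal generated by $x\vee y$ and its Kakutani--Krein representation as $C(K)$ --- this is where uniform completeness enters, by making the representation \emph{surjective}, so that no density or ``uniform limit'' step is actually needed, contrary to your worry --- and then verify the elementary fact that the positively homogeneous piecewise-linear functions on the cone $\{(s,t):s,t\geq 0\}$ spanned by $(s-kt)^+$ and $(kt-s)^+$ form a class closed under $\max(\cdot\,,0)$. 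After restricting to $t=1$ this span is exactly the set of all continuous piecewise-linear functions of $u=s/t\in[0,\infty)$ with finitely many breakpoints, since it already contains the constants (e.g.\ $(s+t)^+-(s+2t)^+=-t$ on the cone) and the identity ($(s-0\cdot t)^+=s$); so your worries about ``stray constant or linear terms'' and about treating $\{y=0\}$ separately evaporate in the homogeneous formulation you yourself propose. Your sketch does leave that central combinatorial lemma unproved, but it is true, elementary, and correctly identified as the crux. In short, your route buys a self-contained argument at the cost of re-deriving Brown--Ross; the paper buys brevity with the substitution $y\mapsto x+y$ plus a citation.
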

\begin{proof}
Note that both sides remain the same when we replace $y$ by $x+y$. Now apply
\cite[Theorem (1)]{BR:91} with $s=x$ and $b=x+y$.
\end{proof}

\section{Main Results}\label{main-result}

For a subset $A$ of a vector lattice $X$, we define its \emph{order closure}
(abbr.~o-closure)
$\overline{A}^{o}$ to be the collection of all $x\in X$ such that $x_\alpha
\xrightarrow{o}x$ in $X$ for some net $(x_\alpha)$ in $ A$.
We say that $A$ is \emph{order closed} (abbr.~o-closed)  in $X$ if
$A=\overline{A}^{o}$. 
We similarly define uo-closure and uo-closedness of a given subset.
Since lattice operations are both order continuous and uo-continuous, it is easy
to see that the o- and uo-closures of a sublattice remain sublattices.
However, the order closure of a sublattice need not be order closed.  This
is the main subject of investigation in this paper.

\begin{lemma}\label{closures}
Let $Y$ be a sublattice of a vector lattice $X$ and $I$ be an ideal of
$X_n^\sim$. Then $\overline{Y}^o\subset
\overline{Y}^{uo}\subset \overline{\overline{Y}^o}^o\subset
\overline{Y}^{\sigma(X,I)}$. Moreover,
\begin{enumerate}
 \item\label{closures1} if $\overline{Y}^o$ is order closed, then it is the
smallest order closed
sublattice of $X$ containing $Y$, and $\overline{Y}^o= \overline{Y}^{uo}$;
\item\label{closures2}  if
$\overline{Y}^{uo}$ is order closed, then it is the smallest order closed
sublattice of $X$ containing $Y$, and
$\overline{Y}^{uo}=\overline{\overline{Y}^o}^o$;
\item\label{closures3} if, in addition, $I$ separates points of $X$, then
$\overline{Y}^{\sigma(X,I)}$
is an order closed sublattice containing $Y$.
\end{enumerate}
\end{lemma}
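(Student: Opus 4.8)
The plan is to establish the chain of inclusions first, then peel off the three numbered consequences. For the inclusions, the only nontrivial containments are $\overline{Y}^{uo}\subset \overline{\overline{Y}^o}^o$ and $\overline{\overline{Y}^o}^o\subset \overline{Y}^{\sigma(X,I)}$; the first inclusion $\overline{Y}^o\subset\overline{Y}^{uo}$ is immediate since order convergence implies uo-convergence. For $\overline{\overline{Y}^o}^o\subset \overline{Y}^{\sigma(X,I)}$, I would use the standard fact that for any ideal $I$ of $X_n^\sim$, order convergence of a net implies convergence in $\sigma(X,I)$ (since each $\phi\in I$ is order continuous), hence $\overline{A}^o\subset \overline{A}^{\sigma(X,I)}$ for every $A$; applying this twice and using that $\sigma(X,I)$-closed sets are $\sigma(X,I)$-closed gives $\overline{\overline{Y}^o}^o\subset \overline{\overline{Y}^{\sigma(X,I)}}^{\sigma(X,I)}=\overline{Y}^{\sigma(X,I)}$. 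The inclusion $\overline{Y}^{uo}\subset\overline{\overline{Y}^o}^o$ is the key point: I expect to argue that if $x\in\overline{Y}^{uo}$, witnessed by a net $(y_\alpha)$ in $Y$ with $y_\alpha\xrightarrow{uo}x$, then by passing to the sublattice generated (or working in the ideal generated by $x$, or using an appropriate truncation) one produces, for suitable elements, order-convergent nets landing in $\overline{Y}^o$. Concretely, the trick is that $(y_\alpha\vee(-u))\wedge u\in Y$ (since $Y$ is a sublattice — here one needs $u\in Y$, or more carefully one works with elements like $(y_\alpha-ku^+)^+$ wherever needed) and these are order bounded and uo-converge, hence order converge to $(x\vee(-u))\wedge u$; letting $u$ range through a suitable set and taking a further order limit places $x$ in $\overline{\overline{Y}^o}^o$.

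For part \eqref{closures1}: assume $\overline{Y}^o$ is order closed. It is a sublattice (order closures of sublattices are sublattices, as noted in the text since lattice operations are order continuous) containing $Y$, and any order closed sublattice $Z\supset Y$ must contain every order limit of nets from $Y$, i.e.\ $Z\supset\overline{Y}^o$; this gives minimality. For the equality $\overline{Y}^o=\overline{Y}^{uo}$, combine $\overline{Y}^o\subset\overline{Y}^{uo}\subset\overline{\overline{Y}^o}^o=\overline{Y}^o$, the last equality being the hypothesis.

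For part \eqref{closures2}: assume $\overline{Y}^{uo}$ is order closed. The uo-closure of a sublattice is a sublattice (lattice operations are uo-continuous, as stated), so $\overline{Y}^{uo}$ is an order closed sublattice containing $Y$. To see it is smallest, let $Z$ be any order closed sublattice with $Y\subset Z$; then $\overline{Y}^o\subset Z$, and applying the o-closure again (using $Z$ order closed) together with the chain of inclusions from the first part gives $\overline{Y}^{uo}\subset\overline{\overline{Y}^o}^o\subset\overline{Z}^o = Z$. Finally, $\overline{Y}^{uo}\subset\overline{\overline{Y}^o}^o$ from the chain, while $\overline{\overline{Y}^o}^o\subset\overline{Y}^{uo}$ follows because $\overline{\overline{Y}^o}^o$ (being an order closed sublattice containing $Y$, once we know $\overline{Y}^{uo}$ is order closed — more directly, $\overline{Y}^o\subset\overline{Y}^{uo}$ so $\overline{\overline{Y}^o}^o\subset\overline{\overline{Y}^{uo}}^o=\overline{Y}^{uo}$). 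Hence equality.

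For part \eqref{closures3}: assume additionally that $I$ separates the points of $X$. Then $\sigma(X,I)$ is a Hausdorff locally convex topology on $X$ that is compatible with the lattice structure in the sense that the lattice operations are $\sigma(X,I)$-continuous on order bounded sets — but more simply, $\overline{Y}^{\sigma(X,I)}$ is a norm/weak closed linear subspace and, since $I\subset X_n^\sim$, order convergence implies $\sigma(X,I)$-convergence, so any order-convergent net in $\overline{Y}^{\sigma(X,I)}$ has its limit in $\overline{\overline{Y}^{\sigma(X,I)}}^{\sigma(X,I)}=\overline{Y}^{\sigma(X,I)}$, proving it is order closed. That it is a sublattice containing $Y$ follows from $\sigma(X,I)$-continuity of lattice operations (valid because $I$ consists of order continuous functionals and separates points). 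The main obstacle I anticipate is the careful bookkeeping in $\overline{Y}^{uo}\subset\overline{\overline{Y}^o}^o$ — specifically producing the order-bounded truncations that stay inside $Y$ (or inside $\overline{Y}^o$) and verifying the iterated order limit converges correctly; the rest is formal manipulation of closure operators together with the already-noted stability of sublattices under o- and uo-closure.
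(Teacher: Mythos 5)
Two steps in your outline do not go through as written. The first is the central inclusion $\overline{Y}^{uo}\subset\overline{\overline{Y}^o}^o$: your truncations $(y_\alpha\vee(-u))\wedge u$ only stay in $Y$ when $u\in Y$, and even granting that, you still must explain why the resulting elements $(x\vee(-u))\wedge u\in\overline{Y}^o$ order converge to $x$ as $u$ ranges over your ``suitable set'' --- that is not automatic, and it is exactly the point you flag as an unresolved obstacle. The resolution (and the paper's argument) is to truncate by the net itself: after splitting into positive and negative parts one may assume $(y_\alpha)\subset Y_+$ and $x\geq 0$; for each fixed $\beta$ the estimate $|y_\alpha\wedge y_\beta - x\wedge y_\beta|\leq |y_\alpha-x|\wedge y_\beta\xrightarrow{o}0$ gives $x\wedge y_\beta\in\overline{Y}^o$, and then $|x\wedge y_\beta - x|\leq |y_\beta-x|\wedge x\xrightarrow{o}0$ gives $x\in\overline{\overline{Y}^o}^o$. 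Both order limits are driven by the single uo-convergence hypothesis; no auxiliary family of truncating elements is needed.

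The second problem is in part (3), where you justify that $\overline{Y}^{\sigma(X,I)}$ is a sublattice by ``$\sigma(X,I)$-continuity of lattice operations.'' Lattice operations are not continuous for weak topologies in general (in $L^2[0,1]$, $\sin(n\cdot)\to 0$ weakly while $|\sin(n\cdot)|\to 2/\pi$ weakly), and the separation hypothesis does not rescue this. The correct route, which the paper takes, is Mazur's theorem: since $I$ separates points, the topological dual of $X$ under the locally solid absolute weak topology $|\sigma|(X,I)$ is again $I$, so the convex set $Y$ has the same closure in $\sigma(X,I)$ as in $|\sigma|(X,I)$, and the latter closure is a sublattice because $|\sigma|(X,I)$ is locally solid. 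Your handling of the remaining inclusions, of order closedness of $\overline{Y}^{\sigma(X,I)}$, and of parts (1) and (2) is correct and matches the paper.
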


\begin{proof}
Obviously, $\ol{Y}^o \subseteq \ol{Y}^{uo}$. Since $I\subseteq X_n^\sim$, $\ol{Y}^{\sigma(X,I)}$ is order closed in $X$. In particular,
$\ol{\ol{Y}^o}^o\subseteq \ol{Y}^{\sigma(X,I)}$.
Let $(y_\alpha)$ be
a net in $Y$ such that $y_\alpha\xrightarrow{ uo}x$ in $X$. By considering
the
positive and negative parts, respectively, we may assume that $(y_\alpha)\subset
Y_+$ and
$x\geq 0$. For each fixed $\beta$, it follows from $\abs{y_\alpha\wedge
y_\beta-x\wedge y_\beta 
}\leq \abs{y_\alpha-x}\wedge y_\beta$ that $y_\alpha\wedge
y_\beta\xrightarrow{o} y_\beta\wedge x$ in $X$, and
consequently, $y_\beta\wedge x\in
\overline{Y}^o$. By
$\abs{y_\beta\wedge x-x}\leq \abs{y_\beta-x}\wedge x$, it follows that
$y_\beta\wedge
x\xrightarrow{o}x$ in $X$, and therefore, $x\in
\overline{\overline{Y}^o}^o$. 
This proves that $\overline{Y}^{uo}\subset \overline{\overline{Y}^o}^o$. Items (1)
and (2) are now clear.
Suppose that $I$ separates points of $X$.
By \cite[Theorem~3.50]{AB:06}, the topological dual of $X$ under
$\abs{\sigma}(X,I)$ is precisely $I$, and thus by Mazur's
Theorem, $$
\overline{Y}^{\sigma(X,I)}=\overline{Y}^{\abs{\sigma}(X,I)}.\eqno(*)$$
This implies  that $\overline{Y}^{\sigma(X,I)}$ is a sublattice
of $X$ by \cite[Theorem~3.46]{AB:06}.
\end{proof}

Remark that \cite[Proposition~3.15]{GTX:16}, which asserts that a sublattice is
o-closed
iff
it is uo-closed, immediately follows from Lemma~\ref{closures}.

\begin{theorem}\label{uo-closure}
Let $X$ be a Banach lattice, $Y$ be a sublattice of $X$, and $I$ be an ideal of
$X_n^\sim$ separating points of $X$. 
Suppose that $X$ has the countable sup property.
Then $\overline{Y}^{uo}=\overline{\overline{Y}^{o}}^{\rm
o}=\overline{Y}^{\sigma(X,I)}$, and all of them are the
smallest order closed sublattice in $X$ containing $Y$.
\end{theorem}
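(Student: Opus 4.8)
The plan is to leverage Lemma~\ref{closures} as much as possible, so that the real work reduces to a single containment. By Lemma~\ref{closures} we already have the chain
$\overline{Y}^o\subseteq\overline{Y}^{uo}\subseteq\overline{\overline{Y}^o}^o\subseteq\overline{Y}^{\sigma(X,I)}$,
and (since $I$ separates points) $\overline{Y}^{\sigma(X,I)}$ is an order closed sublattice containing $Y$. Hence to collapse the whole chain it suffices to prove the reverse inclusion $\overline{Y}^{\sigma(X,I)}\subseteq\overline{Y}^{uo}$; once this is known, all four sets coincide, $\overline{Y}^{uo}$ is order closed, and then part~\eqref{closures2} of Lemma~\ref{closures} gives that it is the smallest order closed sublattice containing $Y$ and equals $\overline{\overline{Y}^o}^o$.

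So fix $x\in\overline{Y}^{\sigma(X,I)}$; I want a net in $Y$ that uo-converges to $x$. First I would reduce to the positive case: writing $x=x^+-x^-$ and using that $\overline{Y}^{\sigma(X,I)}$ is a sublattice together with the lattice operations being $\sigma(X,I)$-continuous in the relevant sense — more precisely, using $(*)$ in the proof of Lemma~\ref{closures} that $\overline{Y}^{\sigma(X,I)}=\overline{Y}^{|\sigma|(X,I)}$ and that $Y_+$ is $|\sigma|(X,I)$-dense in $(\overline{Y}^{\sigma(X,I)})_+$ — it is enough to treat $x\in(\overline{Y}^{\sigma(X,I)})_+$ approximated by a net $(y_\alpha)\subseteq Y_+$ with $y_\alpha\xrightarrow{|\sigma|(X,I)}x$. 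The key analytic step is then a truncation argument: replace $y_\alpha$ by $y_\alpha\wedge x$, which still lies in $\overline{Y}^o\subseteq\overline{Y}^{uo}$ by the computation already carried out in the proof of Lemma~\ref{closures} (for each fixed $\alpha$, $y_\alpha\wedge x$ is an order limit of the net $(y_\alpha\wedge y_\beta)_\beta\subseteq Y$). Now $0\le y_\alpha\wedge x\le x$, and $y_\alpha\wedge x\xrightarrow{|\sigma|(X,I)}x$ because lattice operations are $|\sigma|(X,I)$-continuous; so one has an order-bounded net in $\overline{Y}^{uo}$ converging weakly (for the dual $I$) to $x$.

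The heart of the matter is upgrading this order-bounded, $I$-weakly convergent net to something that uo-converges to $x$. Here is where the countable sup property enters, exactly as in the proof that $\overline{Y}^{\sigma(X,I)}$ is order closed: an order-bounded net $0\le z_\alpha\le x$ with $z_\alpha\to x$ in $\sigma(X,I)$ can be passed to a sequence via the countable sup property, and then one extracts (using that $I$ is norm-bounded-determining, i.e. $I\subseteq X_n^\sim$ and Banach-lattice machinery) a sequence of convex combinations converging in order, or more directly one shows $\sup_\alpha z_\alpha$ exists and equals $x$ and builds an increasing net in $\overline{Y}^{uo}$ with supremum $x$ — an increasing net with supremum $x$ order-converges to $x$, and since $\overline{Y}^{uo}$ is uo-closed (its uo-closure is itself because uo-closure of a uo-closure is a uo-closure, equivalently o-closed sublattices are uo-closed by Lemma~\ref{closures}), $x\in\overline{Y}^{uo}$. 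I expect this passage — producing from the weak convergence an order (or increasing-net) convergence inside the intermediate closure, legitimately invoking the countable sup property and order continuity of $I$ — to be the main obstacle; the positivity reduction and the truncation $y_\alpha\mapsto y_\alpha\wedge x$ are routine given Lemma~\ref{closures}.
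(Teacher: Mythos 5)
Your reduction is the right one: by Lemma~\ref{closures} everything does come down to proving $\overline{Y}^{\sigma(X,I)}\subseteq\overline{Y}^{uo}$, and the positivity reduction via $(*)$ is fine. But the two steps you flag as ``routine'' and ``the main obstacle'' both contain genuine gaps, and the second one is essentially the open problem the paper is built around.

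First, the truncation step. You claim $y_\alpha\wedge x\in\overline{Y}^o$ ``by the computation already carried out in the proof of Lemma~\ref{closures}.'' That computation starts from a net that \emph{uo-converges} to $x$: it uses $\abs{y_\alpha\wedge y_\beta-x\wedge y_\beta}\leq\abs{y_\alpha-x}\wedge y_\beta\xrightarrow{o}0$. Your net $(y_\alpha)$ only converges to $x$ in $\abs{\sigma}(X,I)$, so the estimate $\abs{y_\alpha\wedge y_\beta - y_\alpha\wedge x}\leq\abs{y_\beta-x}$ gives convergence of $(y_\alpha\wedge y_\beta)_\beta$ to $y_\alpha\wedge x$ only in $\abs{\sigma}(X,I)$, which puts $y_\alpha\wedge x$ back in $\overline{Y}^{\sigma(X,I)}$ --- no progress. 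Second, and more seriously, the endgame. Granting that you produce an increasing net in $\overline{Y}^{uo}$ with supremum $x$, you conclude $x\in\overline{Y}^{uo}$ ``since $\overline{Y}^{uo}$ is uo-closed.'' That is not known: closure operators defined by net limits are not idempotent, Lemma~\ref{closures} only says that \emph{if} $\overline{Y}^{uo}$ is order closed \emph{then} it is the smallest such sublattice, and whether $\overline{Y}^{uo}$ is always order closed is exactly Problem~\ref{problem}, left open in the paper (and the analogous statement for $\overline{Y}^{o}$ is refuted by Lemma~\ref{o-not-o}). The alternative route via Mazur/convex combinations also fails to close the gap, since convex combinations of elements of $Y$ need not lie in the sublattice $Y$, and in any case Mazur for $\sigma(X,I)$ only yields $\abs{\sigma}(X,I)$-convergence, not norm or order convergence.

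What is missing is the quantitative mechanism the paper uses to land \emph{inside} $\overline{Y}^{uo}$ directly with a net (in fact a sequence) from $Y$: pass to $X^\delta$, use the countable sup property to find a single $\phi\in\overline{I}_+$ whose carrier $C_\phi$ contains a prescribed sequence (Claim~1), observe that an order bounded sequence in $C_\phi$ with $\sum\phi(\abs{x_n})<\infty$ order converges to $0$ (Claim~2, a Borel--Cantelli-type argument), and then run \emph{two} rounds of approximation: first $y_n\in Y_+$ with $\phi(\abs{y_n-x}\wedge x)$ summable, then $z_n\in Y_+$ with $\phi'(\abs{z_n-x}\wedge y_n)$ summable, yielding $z_n\wedge y_n\in Y$ with $z_n\wedge y_n\xrightarrow{uo}x$. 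Your proposal correctly locates where the hypotheses must enter but does not supply this mechanism, and the steps where it is needed are precisely the ones that break.
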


\begin{proof}
In view of Lemma~\ref{closures} and $(*)$, it suffices to show that 
$\overline{Y}^{\abs{\sigma}(X,I)}\subset \overline{Y}^{uo}$.
Recall that the order completion, $X^\delta$, of $X$ is also a Banach lattice
having
the countable sup property. 
Note also that each member in $I$ extends uniquely to an order continuous
functional
on $X^\delta$ (\cite[Theorem~1.65]{AB:06}) and that
the collection of those extended functionals is an ideal of
$(X^\delta)_n^\sim$ separating points of $X^\delta$. 
Moreover, a net in $X$ is uo-null in $X$ iff it is uo-null in $X^\delta$
(cf.~\cite[Theorem~3.2]{GTX:16}). 
Thus, by passing to $X^\delta$, one may assume that $X$ is order complete.

Recall that if $0 \leq \phi\in X_n^\sim$, its null ideal and carrier
are defined, respectively, by
\[ N_\phi = \{x\in X: \phi(|x|) = 0\} \quad\text{and} \quad C_\phi = N_\phi^d.\]

\underline{Claim 1}.
Every sequence $(x_n)$ in $X_+$ is contained in $C_\phi$ for some $\phi\in
\ol{I}_+$.

Indeed, for each $\phi\in I_+$, let $P_\phi$ be the band projection onto
$C_\phi$.
For each $n $, $(P_\phi x_n)_\phi$ is an upwards directed net, bounded above by
$x_n$.
Since, for any $\psi\in I_+$, $\psi(x_n-\sup_{\phi\in I_+}P_\phi x_n) \leq
\psi(x_n-P_\psi x_n) = 0$, and $I$ separates points of $X$,  it follows that $x_n=\sup_{\phi\in I_+}P_\phi x_n$.
As $X$ has the countable sup property, there exists a sequence $(\phi^n_m)_m$ in
$I_+$ such that
$x_n = \sup_{m}P_{\phi^n_m} x_n$.  Let $\phi =
\sum_{m,n}\frac{\phi^n_m}{2^{m+n}\|\phi^n_m\|+1}$.
Then $0\leq \phi\in \ol{I}$. 
Since $P_{\phi^n_m}x_n \in C_{\phi^n_m} \subseteq C_\phi$ for all $m,n$, and
$C_\phi$ is a band, we see that $x_n \in C_\phi$ for all $n$.  Thus the claim is
proved.

\underline{Claim 2}. If $(x_n)$ is an order bounded sequence in $C_\phi$  for
some $0\leq \phi \in X_n^\sim$  and
$\sum\phi(|x_n|)<\infty$, then $(x_n)$ order converges to $0$.

Set $u =  \inf_k\sup_{n\geq k}|x_n|$.  Since $\phi$ is order continuous,
$\phi(u) \leq \sum_{n\geq k}\phi(|x_n|)$ for all $k$.
Hence, $\phi(u) = 0$.
Also, $u\in C_\phi$, since $C_\phi$ is a band.  It follows that $u =0$.
Therefore, $(x_n)$ order converges to $0$, and the claim is proved.

Suppose that $0\leq x\in \ol{Y}^{|\sigma|(X,I)}$. By Claim 1, choose $ \phi\in
\ol{I}_+$ such that $x\in C_\phi$.
Given any $n\in \N$, choose $\psi\in I_+$ such that $\|\phi-\psi\|<
\frac{1}{2^n\|x\| +1}$, and choose $y_n\in Y_+$ such that $\psi(|y_n-x|)<
\frac{1}{2^n}$.
Then
\[ \phi(|y_n-x|\wedge x) \leq \|\phi-\psi\| \|x\| + \psi(|y_n-x|) \leq
\frac{2}{2^n}.\]
It follows by Claim 2 that $(|y_n-x|\wedge x)$ order converges to $0$.
Now choose $\phi'\in \ol{I}_+$ such that $x, y_n \in C_{\phi'}$ for all $n$.
Since $(|y_n-x|\wedge x)$ order converges to $0$ and $\phi'$ is order
continuous, we may assume that $\phi'(|y_n-x|\wedge x) \leq\frac{1}{2^n}$ for
all $n$.
As above, for each $n$, there exists $z_n \in Y_+$ so that 
$\phi'(|z_n-x| \wedge y_n) \leq \frac{2}{2^n}$.
For any $w\in X_+$,
\begin{align*}
 \phi'(|z_n\wedge y_n-x| \wedge w) &\leq \phi'(|z_n\wedge y_n - x\wedge y_n|)  +
\phi'(|x\wedge y_n -x |)\\
 & \leq  \phi'(|z_n-x|\wedge y_n)  + \phi'(x\wedge |y_n -x|)\leq \frac{3}{2^n}
 \end{align*}
for all $n$.  By Claim 2, $(|z_n\wedge y_n-x| \wedge w)$ order converges to $0$.
This proves that $(z_n\wedge y_n)$ uo-converges to $x$.  Therefore, $x\in
\ol{Y}^{uo}$.
\end{proof}

Clearly, Theorem~\ref{uo-closure} applies to Banach function spaces over
probability spaces.

\begin{remark}\label{uo-cl-re}
\begin{enumerate}
\item\label{uo-cl-re1} Our proof yields that under the assumptions of
Theorem~\ref{uo-closure} if $x\in\overline{Y}^{uo}$ then there exists a sequence
in $Y$ uo-converging to $x$.
\item
The conclusion of Theorem~\ref{uo-closure} still holds if $X$ is merely a vector
lattice but $I$ contains a
strictly positive order continuous functional $\phi$ on $X$. 
\end{enumerate}
\end{remark}

\begin{remark}
\begin{enumerate}
\item Theorem~\ref{uo-closure} implies in particular that
$\overline{Y}^{\sigma(X,I)}$ may be independent of $I$ when $Y$  is a
\emph{sublattice}. This suggests that topological properties may improve
significantly when order structures are involved.
\item
View $\ell^\infty$ as the dual space of $\ell^1$.
For a subset $A$ in $\ell^\infty$, denote by $\overline{A}^{(1)}$ its
$w^*$-sequential closure, and by $\overline{A}^{(n+1)}$ the
$w^*$-sequential
closure of $\overline{A}^{(n)}$ for $n\geq 1$. 
Note that $\overline{A}^o=\overline{A}^{(1)}$ for any subset $A$ in
$\ell^\infty$.
Indeed, if $a_n\xrightarrow{w^*} x$, then $(a_n)$ is
bounded in
$\ell^\infty$ and converges to $x$ coordinatewise, so that  $a_n\xrightarrow{o}
x$ in $\ell^\infty$. Conversely, if a net in $A$ order converges $x$, then by
passing to a tail, we may assume that it is bounded. Clearly, we can extract a
sequence out of it. which converges  to
$x$ coordinatewise, and thus, in $w^*$ by Lebesgue Dominated Convergence
Theorem.
This observation, together with Theorem~\ref{o-closure2} (applied with
$I=(\ell^\infty)_n^\sim=\ell^1$), implies that
$$\overline{Y}^{(2)}=\overline{Y}^{w^*}$$
for any sublattice $Y$ of $\ell^\infty$. This is in sharp contrast to
Ostrovskii's Theorem (cf.~\cite[Theorem~2.34]{HZ:07}), which implies that
$\ell^\infty$ has a subspace $W$ such that $$\overline{W}^{(1)}\subsetneqq
\overline{W}^{(2)}
\subsetneqq\cdots\subsetneqq\overline{W}^{w^*}.$$
Again, it suggests that order structures improve
topological properties.
\end{enumerate}
\end{remark}

\begin{problem}\label{problem}
Is $\overline{Y}^{uo}$ order closed for every sublattice of a vector lattice
$X$?
\end{problem}

If we consider $\ol{Y}^o$ instead of $\ol{Y}^{uo}$ in Problem~\ref{problem},
then it
turns out that an affirmative answer to the problem characterizes order
continuity of $X$.
We begin with a lemma.

\begin{lemma}\label{o-not-o}
There exist $u,v>0$ in $\ell^\infty$ such that $\overline{L_{u,v}}^o\neq
\overline{L_{u,v}}^{uo}$.
\end{lemma}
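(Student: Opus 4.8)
The plan is to exhibit concrete $u,v$ in $\ell^\infty$ and a sublattice $L_{u,v}$ whose order closure is strictly smaller than its uo-closure. By Lemma~\ref{closures}, $\overline{L_{u,v}}^{uo}=\overline{\overline{L_{u,v}}^o}^o$, so it suffices to find an element $x$ that lies in the \emph{second} order closure but not the first. The natural mechanism: order convergence of sequences in $\ell^\infty$ requires a common dominating element, whereas passing to a second order closure lets one ``fill in'' limits one coordinate-block at a time without a uniform bound on the approximating portfolios.

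\medskip

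First I would set up the data. Take $v=\one$ (the constant-one sequence) and choose $u$ to be a bounded sequence that separates the coordinates finely enough — e.g.\ $u=(1,\tfrac12,\tfrac13,\dots)$ or a strictly decreasing positive null sequence with distinct entries. Since $0\le u\le v$, Lemma~\ref{subl} gives $L_{u,v}=\Span\{(u-k\one)^+,(k\one-u)^+:k\in\mathbb R\}$, and because $u$ takes distinct values, for each finite set of coordinates one can produce, by a suitable $\mathbb R$-linear combination of such options, any function supported on those coordinates (this is essentially Ross's finite-dimensional spanning argument restricted to a finite window). Hence $L_{u,v}$ contains all \emph{finitely supported} sequences, so $\overline{L_{u,v}}^o$ contains every sequence that is an order limit of finitely supported ones — and an order-convergent sequence in $\ell^\infty$ is uniformly bounded and converges coordinatewise. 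Thus $\overline{L_{u,v}}^o = c_0 \oplus (\text{whatever }\one\text{ contributes})$; more carefully, $\overline{L_{u,v}}^o$ is the closed sublattice generated by $c_0$ together with the coordinatewise limits forced by the options on $u$, and one checks it equals $c_0 + \mathbb R\one$ (or a similarly ``small'' space). The key point: $\overline{L_{u,v}}^o$ does \emph{not} contain an arbitrary bounded sequence such as $w=(1,0,1,0,1,0,\dots)$, because $w$ is not a coordinatewise limit of any \emph{bounded-in-$\ell^\infty$} sequence drawn from $c_0+\mathbb R\one$.

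\medskip

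Next I would show $w\in\overline{\overline{L_{u,v}}^o}^o = \overline{L_{u,v}}^{uo}$. Since $\overline{L_{u,v}}^o\supseteq c_0+\mathbb R\one$, for each $n$ the truncation $w_n := (w_1,\dots,w_n,0,0,\dots)$ lies in $c_0\subseteq\overline{L_{u,v}}^o$ (it is finitely supported), and $w_n\to w$ coordinatewise with $\|w_n\|_\infty\le 1$, i.e.\ $w_n\xrightarrow{o}w$ in $\ell^\infty$. Hence $w\in\overline{\overline{L_{u,v}}^o}^o$. On the other hand, $w\notin\overline{L_{u,v}}^o$: a sequence in $L_{u,v}$ order-converging to $w$ would be norm-bounded and converge coordinatewise to $w$; but each element of $L_{u,v}$ is a function of $u$ that is eventually constant is false — rather, I should argue directly that coordinatewise limits of elements of $L_{u,v}$ that are \emph{also order limits} (hence dominated) can only reach $c_0+\mathbb R\one$, and $w\notin c_0+\mathbb R\one$. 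Equivalently, invoke that $\overline{L_{u,v}}^o$, being the first order closure, is contained in the $w^*$-sequential closure which here equals the first order closure by the Remark's observation; so if $\overline{L_{u,v}}^o$ were $w^*$-closed it would be all of $\ell^\infty$, forcing $w$ in — so the leverage is to pin down $\overline{L_{u,v}}^o$ precisely as $c_0+\mathbb R\one$ and note it is strictly smaller.

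\medskip

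\textbf{The main obstacle} I anticipate is the precise identification of $\overline{L_{u,v}}^o$ — specifically, proving the \emph{negative} inclusion $w\notin\overline{L_{u,v}}^o$ rigorously. This requires understanding exactly which bounded sequences arise as coordinatewise limits of norm-bounded sequences from $L_{u,v}$: one must show the order closure of $L_{u,v}$ is ``thin'' (e.g.\ equals $c_0+\mathbb R\one$, or at any rate misses $w$), which hinges on a careful analysis of how options on a fixed $u$ combine. A clean way around this is to choose $u$ so that $L_{u,v}$'s structure is transparent — for instance $u_n=2^{-n}$, making each $(u-k\one)^+$ and $(k\one-u)^+$ eventually $0$ on a cofinite set, so every element of $L_{u,v}$ is finitely supported, whence $L_{u,v}=c_{00}$ exactly (plus the span of $\one$ only if $k=0$ gives $u^+=u\in c_0$), giving $\overline{L_{u,v}}^o=c_0$ outright; then $w\in c_0^{oo}\setminus c_0$ is immediate from the truncation argument above. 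Verifying $L_{u,v}=c_{00}$ for such $u$ — or the closest correct variant — is the one computation worth doing in full.
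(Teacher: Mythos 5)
Your reduction via Lemma~\ref{closures} and your diagnosis of where the difficulty lies (proving the negative inclusion $w\notin\overline{L_{u,v}}^o$) are both correct, but the construction you propose cannot work, and the claim it rests on is false. If $u$ is a strictly decreasing null sequence and $v=\one$, then for $u_{n+1}<k_2<k_1<u_n$ the element $\frac{1}{k_1-k_2}\left((u-k_2\one)^+-(u-k_1\one)^+\right)$ equals $\one_{\{1,\dots,n\}}$, so $L_{u,\one}$ contains every finitely supported sequence. But then for \emph{any} $w\in\ell^\infty$ the truncations $w\one_{\{1,\dots,n\}}$ lie in $L_{u,\one}$ itself, are dominated by $\|w\|_\infty\one$, and converge coordinatewise to $w$; hence they order converge to $w$ in $\ell^\infty$, and $\overline{L_{u,\one}}^o=\ell^\infty$, which is trivially order closed. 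Your assertion that $(1,0,1,0,\dots)$ is not a coordinatewise limit of a norm-bounded sequence from $c_0+\mathbb{R}\one$ is exactly what this refutes --- indeed you invoke the very same truncation argument one paragraph later to place $w$ in the \emph{second} order closure, without noticing that the truncations already lie in $L_{u,v}$, so the identical argument places $w$ in the \emph{first} order closure. More structurally: any null sequence $u$ lies in $(\ell^\infty)^a=c_0$, so Theorem~\ref{option} of the paper already guarantees that $\overline{L_{u,v}}^o$ is order closed for every $v\geq 0$ when $u\in c_0$; no example of the kind you seek can be built from such a $u$.

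The missing idea is a rigidity constraint on $L_{u,v}$ that forces unbounded coefficients. The paper works in $\ell^\infty(\N\times\N)$ and chooses $u,v$ so that every $z\in L_{u,v}$ satisfies $\lim_n z_{mn}=m\,z_{m1}$ for all $m$: the relation holds for $u$ and $v$ and is preserved by linear combinations and by $\vee,\wedge$, hence by the whole sublattice. The target $e$ with $e_{m1}=1$ and $e_{mn}=0$ for $n\geq 2$ is a coordinatewise (hence uo-) limit of elements of $L_{u,v}$, but any sequence in $L_{u,v}$ converging to it coordinatewise must, for each $m$, eventually have its $m$-th row tending to roughly $m$ at infinity, and so cannot be norm bounded; therefore $e\notin\overline{L_{u,v}}^o$. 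Without some such constraint tying the coordinates you are approximating to far-away coordinates with large multipliers, the first order closure simply swallows everything, as it does in your example.
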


\begin{proof}
Regard $\ell^\infty$ as $ \ell^\infty(\mathbb{N}\times \mathbb{N})$, and write
every element $x\in \ell^\infty(\mathbb{N}\times \mathbb{N})$ as $x =
(x_{mn})_{m,n\geq 1}$, where $x_{mn} \in \mathbb{R}$ for all $m,n \geq 1$.
Choose strictly increasing sequences $(c_{mn})^\infty_{n=1}$, $m\in \N$, 
and $(c_m)^\infty_{m=1}$ such  that
\begin{enumerate}
\item $(c_{mn})^\infty_{n=1}$ converges to $c_m$ for all $m$,
 \item $0 < c_m <c_{m+1, n}<1$ for all $m,n$.
\end{enumerate}
Let  $u = (u_{mn})\in\ell^\infty$  and $v =
(v_{mn})\in\ell^\infty$, where $u_{m1} = \frac{1}{m}$ and $u_{m,n+1} = 1$,
$v_{m1} =
\frac{c_m}{m}$ and $v_{m,n+1} = c_{mn}$ for all $m,n\geq 1$.

For any $k,j\in \N$, if $c_{kj} < \alpha < \alpha' < c_{k,j+1}$ and $c_k < \beta
< \beta' < c_{k+1,1}$,
then a direct calculation shows that if we write $x^{kj} = (x^{kj}_{mn})$ for
the
element
 \[\frac{(v-\beta u)^+ - (v-\beta'
u)^+}{\beta-\beta'}- \frac{(v-\alpha u)^+ - (v-\alpha'u)^+}{\alpha-\alpha'},\]
then 
\[ kx^{kj}_{k1} =1,\ kx^{kj}_{kn} = 0 \text{ if $2\leq n \leq j+1$},\ \text{and
$x^{kj}_{mn} = 0$ if $m \neq k$}.
\]
Let $y^j = \sum^j_{k=1}kx^{kj}$.  
Then $y^j \in L_{u,v}$, and $(y^j)$ converges coordinatewise to the element
$e\in
\ell^\infty(\N\times \N)$ given by $e_{mn} =1$ if $n=1$ and $0$ otherwise.
Thus $e\in \ol{L_{u,v}}^{uo}$.

We now show that $e\not\in \overline{L_{u,v}}^{o}$.  Otherwise, we can find an
order,
hence norm, bounded sequence  $(z^{(N)})$ in $L_{u,v}$ such that
$\lim_{N}z^{(N)}_{mn}=e_{mn}$ for any $m,n\geq 1$. 
For any $m\geq 2$, we can choose $N$ large enough such that 
$$\abs{z^{(N)}_{m1}-1}<\frac{1}{2},\mbox{ so that }z^{(N)}_{m1}>\frac{1}{2}.$$
Observe that $\lim_nz^{(N)}_{mn}=mz^{(N)}_{m1}$ since this holds for $u$ and $v$
and thus for every vector in $L_{u,v}$.
Thus, $\norm{z^{(N)}}_\infty\geq \frac{m}{2}$. By arbitrariness of $m$, this 
contradicts the boundedness of
$(z^{(N)})$. Therefore, $e\not\in \overline{Y}^{o}$, so that
$\overline{L_{u,v}}^o\neq
\overline{L_{u,v}}^{uo}$.
\end{proof}

Recall that a sublattice $Y$ of a vector lattice $X$ is said to be
\emph{regular}
if any net in $Y$ that decreases to $0$ in $Y$ also decreases to $0$ in $X$.

\begin{theorem}\label{o-closure}
Let $X$ be a $\sigma$-order complete Banach lattice.
The following are equivalent.
\begin{enumerate}
\item\label{o-closure1} $X$ is order continuous.
 \item\label{o-closure2} $\overline{Y}^o=\overline{Y}^{\sigma(X,X_n^\sim)}$ for
every sublattice
$Y$ of $X$.
\item\label{o-closure3} $\overline{Y}^o$ is order closed for every sublattice
$Y$ of $X$.
\item\label{o-closure4} $\overline{Y}^o=\overline{Y}^{uo}$ for every sublattice
$Y$ of $X$.
\item\label{o-closure5}
$\overline{L_{x,y}}^o=\overline{L_{x,y}}^{\sigma(X,X_n^\sim)}$ for
all $x,y\in X_+$.
\item\label{o-closure6} $\overline{L_{x,y}}^o$ is order closed for all $x,y\in
X_+$.
\item\label{o-closure7} $\overline{L_{x,y}}^o=\overline{L_{x,y}}^{uo}$ for all
$x,y\in X_+$.
\end{enumerate}
\end{theorem}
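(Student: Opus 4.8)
The plan is to prove the seven conditions equivalent by establishing a cycle of implications together with a few shortcuts, using Lemma~\ref{closures} and the counterexample of Lemma~\ref{o-not-o} as the main tools. First I would observe that \eqref{o-closure2}$\Rightarrow$\eqref{o-closure3} and \eqref{o-closure5}$\Rightarrow$\eqref{o-closure6} are immediate, since $\overline{Y}^{\sigma(X,X_n^\sim)}$ is always order closed (it is a $\sigma(X,I)$-closed set with $I=X_n^\sim$, which is order closed by Lemma~\ref{closures}), and likewise \eqref{o-closure3}$\Rightarrow$\eqref{o-closure4} and \eqref{o-closure6}$\Rightarrow$\eqref{o-closure7} follow directly from Lemma~\ref{closures}\eqref{closures1}, which says that whenever $\overline{Y}^o$ is order closed it coincides with $\overline{Y}^{uo}$. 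The implications \eqref{o-closure2}$\Rightarrow$\eqref{o-closure5}, \eqref{o-closure3}$\Rightarrow$\eqref{o-closure6}, \eqref{o-closure4}$\Rightarrow$\eqref{o-closure7} are trivial specializations. So the real content is: (a) \eqref{o-closure1}$\Rightarrow$\eqref{o-closure2}, and (b) closing the loop from the $L_{x,y}$-versions back to \eqref{o-closure1}, for which I would prove \eqref{o-closure7}$\Rightarrow$\eqref{o-closure1} (equivalently the contrapositive).

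For \eqref{o-closure1}$\Rightarrow$\eqref{o-closure2}: assume $X$ is order continuous. Then order convergence of a net implies norm convergence, so $\overline{Y}^{\sigma(X,X_n^\sim)}\supseteq \overline{Y}^o$ always, and I need the reverse inclusion. Since $X$ is order continuous, $X_n^\sim = X^*$ and $\sigma(X,X_n^\sim)$ is the weak topology; also, by Lemma~\ref{closures} (using $(*)$ there and Mazur's theorem, since $Y$ is a sublattice), $\overline{Y}^{\sigma(X,X^*)} = \overline{Y}^{\norm{\cdot}}$. So it suffices to show that for a sublattice $Y$ of an order continuous Banach lattice, $\overline{Y}^{\norm{\cdot}} \subseteq \overline{Y}^o$. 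This is a known fact: in an order continuous Banach lattice, if $x_n\to x$ in norm then a subsequence order converges to $x$ (the standard argument: pass to a subsequence with $\norm{x_{n_{k+1}}-x_{n_k}}\le 2^{-k}$, so $\sum|x_{n_{k+1}}-x_{n_k}|$ converges in norm to some $u\in X_+$; then the tails $\sup_{j\ge k}|x_{n_j}-x|\le \sum_{j\ge k}|x_{n_{j+1}}-x_{n_j}|\downarrow 0$ by order continuity of the norm, so $x_{n_k}\xrightarrow{o}x$). Hence $\overline{Y}^{\norm{\cdot}}\subseteq \overline{Y}^o$, giving equality and \eqref{o-closure2}.

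For the reverse direction, I would prove the contrapositive: if $X$ is \emph{not} order continuous, then \eqref{o-closure7} fails, i.e.\ there exist $x,y\in X_+$ with $\overline{L_{x,y}}^o\ne \overline{L_{x,y}}^{uo}$. Here is where Lemma~\ref{o-not-o} enters. A $\sigma$-order complete Banach lattice that is not order continuous contains a lattice copy of $\ell^\infty$ — more precisely, by the classical characterization (see \cite{AB:06}), $X$ not order continuous and $\sigma$-order complete implies $X$ contains an order copy of $\ell^\infty$, and since $X$ is $\sigma$-order complete this copy can be taken to be a closed sublattice which is even a projection band, or at least a regular sublattice closed under the relevant operations. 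I would take $u,v>0$ in this copy of $\ell^\infty$ as produced by Lemma~\ref{o-not-o}, viewed now as elements of $X$. The point is that $L_{u,v}$ computed inside $\ell^\infty$ agrees with $L_{u,v}$ computed inside $X$ (lattice operations and spans are intrinsic, by Lemma~\ref{subl}), and one needs the order/uo closures to transfer: the element $e$ witnessing $\overline{L_{u,v}}^{uo}\setminus\overline{L_{u,v}}^o$ in $\ell^\infty$ must still lie outside $\overline{L_{u,v}}^o$ in $X$ and still lie in $\overline{L_{u,v}}^{uo}$ in $X$. The uo part is easy because the copy of $\ell^\infty$ is regular (uo-convergence passes from a regular sublattice to the whole space, cf.\ \cite{GTX:16}); the order part — showing $e\notin\overline{L_{u,v}}^o$ in $X$ — needs the copy of $\ell^\infty$ to be an ideal or a projection band so that an order bounded net in $X$ inside $\ell^\infty$ stays order bounded \emph{within} $\ell^\infty$, reducing to the computation already done in Lemma~\ref{o-not-o}.

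The main obstacle I anticipate is exactly this last transfer step: arranging that the copy of $\ell^\infty$ inside the non–order-continuous $\sigma$-order complete Banach lattice $X$ sits nicely enough (as a projection band, or at least as an ideal) that the argument of Lemma~\ref{o-not-o} survives verbatim. The standard structure theory does give a closed sublattice lattice-isometric to $\ell^\infty$, and in the $\sigma$-order complete setting one can upgrade this to a band and then, using $\sigma$-order completeness, to a projection band; but care is needed to confirm that the specific $u,v$ and the witness $e$ behave correctly, in particular that "order bounded in $X$" for a sequence living in the band forces "order bounded in $\ell^\infty$." Once that is in place, the contradiction derived in Lemma~\ref{o-not-o} (that $\norm{z^{(N)}}_\infty$ must blow up) goes through inside $X$ as well, completing the cycle and hence the theorem.
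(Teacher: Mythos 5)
Your overall architecture (the chain $(1)\Rightarrow(2)\Rightarrow(3)\Rightarrow(4)$, its $L_{x,y}$ analogue, and closing the loop via $(7)\Rightarrow(1)$ by contraposition through Lemma~\ref{o-not-o}) is exactly the paper's, and your proof of $(1)\Rightarrow(2)$ (order closure $=$ norm closure $=$ weak closure via Mazur) is the paper's argument verbatim. The problem is the one step you yourself flag as the main obstacle: transferring $e\notin\overline{L_{u,v}}^{o}$ from $\ell^\infty$ to $X$. Your proposed fix --- upgrading the copy of $\ell^\infty$ to an ideal or a projection band --- is not available in general. The copy produced by the proof of \cite[Theorem~4.51]{AB:06} is built from a disjoint sequence $(x_n)$ as the set of order sums $\sum a_nx_n$; it is a regular closed sublattice but emphatically not an ideal (elements dominated by $x_1$ other than its multiples are missing), and there is no way to enlarge it to a band while keeping it lattice isomorphic to $\ell^\infty$: e.g.\ $L^\infty[0,1]$ is $\sigma$-order complete and not order continuous, yet every projection band in it is atomless, hence not lattice isomorphic to the atomic lattice $\ell^\infty$. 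So the step ``arrange the copy to be a projection band'' would fail.

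The correct mechanism, which your sketch is missing, does not need the copy to be an ideal. The copy is a \emph{Banach lattice} isomorphic, regular sublattice, and $\ell^\infty$ has a strong unit; this is enough. If $e\in\overline{L_{u,v}}^{o2}$ (closure in $X$), take a net $(y_\alpha)$ in $L_{u,v}$ with $y_\alpha\xrightarrow{o}e$ in $X$ and pass to a tail that is order bounded in $X$. Order bounded implies norm bounded in $X$, hence norm bounded in the copy of $\ell^\infty$ (the isomorphism is topological), hence \emph{order} bounded in $\ell^\infty$ because norm-bounded sets there are dominated by a multiple of $\one$. Now invoke \cite[Corollary~2.12]{GTX:16}: for a net in a regular sublattice that is order bounded in that sublattice, order convergence in $X$ passes down to order convergence in the sublattice. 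This yields $y_\alpha\xrightarrow{o}e$ in $\ell^\infty$, contradicting Lemma~\ref{o-not-o}. (The uo half of your transfer, via regularity and \cite[Theorem~3.2]{GTX:16}, is fine as you state it.) Without replacing your ``ideal/projection band'' step by this norm-boundedness-plus-strong-unit argument, the implication $(7)\Rightarrow(1)$ is not established.
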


\begin{proof}
Suppose that \eqref{o-closure1} holds. 
Then every order convergent net is norm convergent. Note also that every norm
convergent sequence admits a subsequence order converging to the same limit
(cf.~\cite[Lemma~3.11]{GX:14}). Therefore, the order closure of any set
coincides with its norm closure.
Moreover, $\sigma(X,X_n^\sim)$ is now just the weak topology, and thus the
$\sigma(X,X_n^\sim)$-closure coincides with the weak closure. Hence,
\eqref{o-closure2} holds by Mazur's Theorem. The implication
\eqref{o-closure2}$\Rightarrow$\eqref{o-closure3} is immediate because the
$\sigma(X,X_n^\sim)$-closure of any set is order closed.
The implication \eqref{o-closure3}$\Rightarrow$\eqref{o-closure4} follows from
Lemma~\ref{closures}.
Similarly, we obtain
\eqref{o-closure1}$\Rightarrow$\eqref{o-closure5}$\Rightarrow$\eqref{o-closure6}
$\Rightarrow$\eqref{o-closure7}. Obviously, \eqref{o-closure4} implies
\eqref{o-closure7}.

It remains to be shown that \eqref{o-closure7}$\Rightarrow$\eqref{o-closure1}.
Suppose that
$X$ is not order continuous. Then $X$ has a lattice isomorphic copy of
$\ell^\infty$. 
The proof of \cite[Theorem~4.51]{AB:06} shows that the copy of
$\ell^\infty$ can be chosen to be regular in $X$. 
For a subset $W$ of $\ell^\infty \subseteq X$, denote its order closures in
$\ell^\infty$ and in $X$  by $\ol{W}^{o1}$ and $\ol{W}^{o2}$, respectively. 
Similarly for the respective uo-closures.
By Lemma 2.6, there are $u, v>0$ in $\ell^\infty$ and an element $e\in
\ell^\infty$ such that $e\in \ol{Y}^{uo1} \backslash \ol{Y}^{o1}$, where $Y =
L_{u,v}$.
We claim that $e\in \overline{Y}^{uo2}\backslash \overline{Y}^{o2}$.
Since $\ell^\infty$ is regular, every uo-null net in $\ell^\infty$ is uo-null in
$X$ by \cite[Theorem~3.2]{GTX:16}, implying that $e\in \overline{Y}^{uo1}\subset
\overline{Y}^{uo2}$.
If $e\in \overline{Y}^{o2}$, then there exists a net $(y_\alpha)$ in $Y$ such
that
$y_\alpha\xrightarrow{o}e$ in $X$.
By passing to a tail, we may assume that $(y_\alpha)$ is order, and thus norm,
bounded in $X$.
Then it is norm, and thus order, bounded in $\ell^\infty$.
By \cite[Corollary~2.12]{GTX:16}, we obtain that $y_\alpha\xrightarrow{o}e$ in
$\ell^\infty$, contradicting our choice of $e\not\in \overline{Y}^{o1}$.
This proves  \eqref{o-closure7}$\Rightarrow$\eqref{o-closure1}.
\end{proof}

The next main result (Theorem~\ref{option}) is a ``localized'' version of
Theorem~\ref{o-closure}.  It also yields information on the order closures of
option spaces in many
instances.
Recall first that the \emph{order continuous part}, $X^a$, of a Banach lattice
$X$ is the collection of all vectors $x$ in $X$ such that every disjoint sequence in
$[0,\abs{x}]$ is norm null. It is the largest norm closed ideal of $X$ which is
order continuous in its own right.
For a Banach function space $X$ defined on a probability space
$(\Omega,\Sigma,\mathbb{P})$, it is well known and easily seen that $\one\in
X^a$ iff  $X$ contains the constant
functions, and  
\[\lim_{\mathbb{P}(A)\to 0}\|\one_A\| = 0.\eqno(\diamond)\]

\begin{lemma}\label{split}
Let $X$ be a Banach function space over $(\Omega,\Sigma,\mathbb{P})$ such that
$\one\in X^a$, and $f\in X_+$. 
Let $g\in X_+$ be a bounded function that is the a.s.-limit of a sequence in
$O_f$.  For any
$\varepsilon > 0$, there exist  $h^1,h^2\in X_+$ and a set
$A \in \Sigma$ such that  $\mathbb{P}(\Omega\backslash A) < \varepsilon$, $\supp
h^1\subseteq  A$, $\supp
h^2\subseteq \Omega\backslash A$, $\|(g-h^1)\one_A\|_\infty <\varepsilon$,
$\|h^2\| < \varepsilon$ and $h^1 +
h^2 \in O_f$.
\end{lemma}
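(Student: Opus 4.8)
The plan is to manufacture $h^1$ and $h^2$ from a single, suitably truncated term of an approximating sequence, localized by Egorov's theorem.

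First I would record that, by Lemma~\ref{subl}, $O_f=L_{f,\one}$ is a sublattice of $X$ which, being a linear span containing $\one$, also contains $0$ and every constant function $c\one$. Let $(g_n)$ be a sequence in $O_f$ with $g_n\xrightarrow{a.s.} g$, and set $N=\|g\|_\infty+1<\infty$. Replacing each $g_n$ by $(g_n\vee 0)\wedge N\one$ — still a member of $O_f$ since $O_f$ is a sublattice containing $0$ and $N\one$ — I may assume $0\le g_n\le N\one$ for all $n$ while keeping $g_n\xrightarrow{a.s.}g$ (on the conull set where $g_n(\omega)\to g(\omega)$ and $0\le g(\omega)<N$, the truncated sequence still converges to $g(\omega)$).

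Next, using $(\diamond)$, I would fix $\delta\in(0,\varepsilon)$ small enough that $\mathbb{P}(B)<\delta$ implies $\|\one_B\|<\varepsilon/N$. Egorov's theorem then produces a set $A\in\Sigma$ with $\mathbb{P}(\Omega\backslash A)<\delta$ on which $g_n\to g$ uniformly; choose $n$ with $\sup_A\abs{g_n-g}<\varepsilon$. Finally put $h^1=g_n\one_A$ and $h^2=g_n\one_{\Omega\backslash A}$. Both lie in $X_+$ because $X$ is an ideal of $L^0(\Sigma)$ and $0\le g_n\in X$; their supports are contained in $A$ and $\Omega\backslash A$ respectively; $h^1+h^2=g_n\in O_f$; $\|(g-h^1)\one_A\|_\infty=\|(g-g_n)\one_A\|_\infty<\varepsilon$; and $\|h^2\|\le N\|\one_{\Omega\backslash A}\|<\varepsilon$, with $\mathbb{P}(\Omega\backslash A)<\delta<\varepsilon$ as required.

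There is no genuine difficulty here; the only points demanding care are (i) arranging that the approximants are nonnegative and uniformly bounded, which is exactly where one uses that $O_f$ is a \emph{sublattice} containing the constant functions, and (ii) controlling $\|h^2\|$, for which the bound $N$ must be fixed \emph{before} invoking Egorov, so that $(\diamond)$ can be applied to the small set $\Omega\backslash A$.
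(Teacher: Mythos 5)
Your proof is correct and follows essentially the same route as the paper's: truncate the approximating elements using that $O_f$ is a sublattice containing the constants, apply Egorov together with $(\diamond)$, and split a single approximant as $h\one_A + h\one_{\Omega\setminus A}$. The only cosmetic difference is that the paper normalizes to $0\le g\le \one$ at the outset and truncates after invoking Egorov, whereas you carry the bound $N$ through and absorb it into the choice of $\delta$; both handle the $\|h^2\|$ estimate correctly.
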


\begin{proof}
Assume that $0\leq g\leq \one$. 
Let $\varepsilon > 0$ be given.
By $(\diamond)$, there exists $\delta\in (0,\varepsilon)$ such that $\|\one_A\|<
\varepsilon$ whenever $A\in\Sigma$ and $\mathbb{P}(A)< \delta$.
Since $g$ is the a.s.-limit of a sequence  in $O_f$, 
by Egoroff's Theorem, there exist $h \in O_f$ and $A\in \Sigma$ such that
$$\|(g-h)\one_A\|_\infty < \varepsilon,\mbox{ and }\mathbb{P}(\Omega\backslash
A)<\delta.$$
Since $O_f$ is a sublattice containing $\one$, by replacing $h$ with $h^+\wedge
\one$, we may assume that $0\leq h\leq \one$.
Set $h^1 = h\one_A$ and $h^2 = h\one_{\Omega\backslash A}$.
Obviously, we have $\mathbb{P}(\Omega\backslash
A)<\varepsilon$,  $\supp
h^1\subseteq  A$, $\supp
h^2\subseteq \Omega\backslash A$, $\|(g-h^1)\one_A\|_\infty
<\varepsilon$ and $h^1 + h^2 \in O_f$.
Also, $\abs{h^2} \leq \one_{\Omega\backslash A}$ and hence
$\norm{h^2} \leq \norm{\one_{\Omega\backslash A}} < \varepsilon$ since 
$\mathbb{P}(\Omega\backslash
A)<\delta$.
\end{proof}

\begin{theorem}\label{option}
Let $X$ be a $\sigma$-order complete Banach lattice, and let $0<x\in X^a$. Then
$\overline{L_{x,y}}^o$ is order
closed for every $y\geq 0$. In particular, if $X$ is a Banach function space
over $(\Omega,\Sigma,\mathbb{P})$ such that $\one\in X^a$, then
$\overline{O_f}^o$ is order
closed for every $f\geq 0$.
\end{theorem}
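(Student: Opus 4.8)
The plan is to reduce the general statement about $L_{x,y}$ to the Banach-function-space statement about $O_f$, then prove the latter using Lemma~\ref{split} together with the machinery of Theorem~\ref{uo-closure}. For the reduction: since $X$ is $\sigma$-order complete, it is uniformly complete, so by Lemma~\ref{subl}, $L_{x,y}$ is the option space of $x$ relative to the order unit $b=x+y$ in the principal ideal $X_b$. The principal ideal $X_b$, equipped with the order-unit norm, is an $AM$-space with unit, hence lattice isometric to a $C(K)$, and $x\in X^a$ translates into the condition that the corresponding representation lands in the order-continuous part. One then wants to realize this $C(K)$ (or at least the relevant separable sublattice generated by $x,b$) as a Banach function space over a probability measure so that Lemma~\ref{split} applies. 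Alternatively — and this is cleaner — I would work directly: replace $X$ by the band generated by $x$ and $y$, note it has the countable sup property, pick a strictly positive order continuous functional on it (which exists because $x\in X^a$ forces order continuity on the relevant band), and carry out the argument there. The point of invoking the Banach function space language is purely for the measure-theoretic tools (Egoroff), so I would state the main work as the $O_f$ case and deduce the $L_{x,y}$ case from it by this localization.

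For the core assertion — $\overline{O_f}^o$ is order closed when $\one\in X^a$ — I would argue as follows. Let $g\in\overline{\,\overline{O_f}^o\,}^o$; by Lemma~\ref{closures} and Theorem~\ref{uo-closure} (applied with $I=X_n^\sim$, which separates points on a Banach function space), it suffices to produce a \emph{sequence} in $O_f$ that uo-converges to $g$, and in fact it is enough to show $g\in\overline{O_f}^o$ directly, i.e. that the single order closure is already order closed. By splitting into positive and negative parts and truncating (using that $O_f$ is a sublattice containing $\one$, hence $\overline{O_f}^o$ is too), reduce to the case $0\le g\le\one$, and furthermore, since $g$ is an order limit — hence, after passing to a tail, a norm-bounded order limit — of elements of $O_f$, one may assume $g$ is the a.s.-limit of a sequence in $O_f$. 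Now iterate Lemma~\ref{split}: for $\varepsilon_k=2^{-k}$ obtain $h^1_k,h^2_k\in X_+$ and $A_k\in\Sigma$ with $\mathbb P(\Omega\setminus A_k)<2^{-k}$, $\norm{(g-h^1_k)\one_{A_k}}_\infty<2^{-k}$, $\norm{h^2_k}<2^{-k}$, and $h^1_k+h^2_k\in O_f$. The sequence $h^1_k+h^2_k$ is the candidate; one checks it order-converges (in $X$) to $g$: write $(h^1_k+h^2_k)-g=(h^1_k-g)\one_{A_k}+(h^2_k-g\one_{\Omega\setminus A_k})$, control the first term in sup-norm by $2^{-k}$ on $A_k$ (and it is supported there), and control the second term by $h^2_k+g\one_{\Omega\setminus A_k}$, whose norm tends to $0$ while a.s.\ along a subsequence the set $\Omega\setminus A_k$ shrinks to a null set by Borel--Cantelli. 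To upgrade a.s.-plus-norm-bounded convergence to order convergence, use the standard fact (quoted in the Notation section) that in a Banach function space a sequence $f_n\to0$ a.s.\ with $\abs{f_n}\le F\in X$ satisfies $f_n\xrightarrow{o}0$ — so I need a common dominating function, which is $\one$ for the first piece and, say, $\sup_k h^2_k + g$ for the second piece, provided this supremum lies in $X$.

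The main obstacle I anticipate is precisely the last point: assembling a single order-bounding element in $X$ for the tail-controlled pieces, so that a.s.-convergence genuinely promotes to order-convergence in $X$ (not just in $L^0$). Since $\norm{h^2_k}<2^{-k}$, the series $\sum_k h^2_k$ converges in norm, and as $X$ is $\sigma$-order complete and has the countable sup property, one can arrange $\sup_k h^2_k\in X$ — but one must be a little careful to choose the $\varepsilon_k$ fast enough (geometrically) and perhaps replace $h^2_k$ by partial sums to get genuine domination; this bookkeeping is the technical heart. A secondary subtlety is ensuring at the very start that $g$ being in the \emph{second} order closure (not just the first) still lets us treat $g$ as an a.s.-limit of a norm-bounded sequence from $O_f$: here one uses that the first order closure $\overline{O_f}^o$ is contained in the band $X(\sigma(f))$-type object on which everything is well-behaved, and that a norm-bounded order limit of a.s.-limits is again, along a subsequence, an a.s.-limit — so a diagonal argument reduces the second closure to the first. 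Once these domination issues are handled, the conclusion $\overline{O_f}^o=\overline{\,\overline{O_f}^o\,}^o$, i.e.\ order-closedness, follows, and the $L_{x,y}$ statement drops out by the localization described above.
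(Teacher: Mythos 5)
Your treatment of the bounded special case is essentially the paper's: pass to $\overline{O_f}^{uo}$ via Theorem~\ref{uo-closure} and Remark~\ref{uo-cl-re}, apply Lemma~\ref{split} with $\varepsilon_k=2^{-k}$, and dominate the errors by $\one$ plus the norm-convergent series $\sum_k h^2_k$ (whose increasing partial sums norm-converge to an upper bound, so the domination you worry about does work out). But there are two genuine gaps. First, the reduction to $0\le g\le\one$ is not justified: if $g\ge 0$ is unbounded, proving $g\wedge n\one\in\overline{O_f}^o$ for every $n$ only puts $g$ back into the second order closure, which is where you started. One must run the splitting argument at all truncation levels simultaneously: apply Lemma~\ref{split} to $g_n=g\wedge n\one$ with tolerance $2^{-n}$, glue on the sets $B_n=\{g\le n\}\cap\bigl(\bigcap_{m\ge n}A_m\bigr)$, and check that the single sequence $h_n=h^1_n+h^2_n\in O_f$ converges a.s.\ to $g$ while being dominated by $g+\one+\sum_m h^2_m$. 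This gluing, not the bounded case, is the technical heart.

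Second, and more seriously, neither of your proposed reductions of the $L_{x,y}$ statement to the $O_f$ statement works. In $O_f=L_{\one,f}$ it is $\one$, the element lying in $X^a$, that serves as numeraire; so in $L_{x,y}$ the role of $\one$ must be played by $x$, not by $b=x+y$. You only know $x\in X^a$, and in the AM-space $X_b$ the unit $b$ is never in the order continuous part unless $X_b$ is finite dimensional, so condition $(\diamond)$ and hence Lemma~\ref{split} are unavailable there. Your alternative, the band generated by $x$ and $y$, fails because $x$ need not be a weak unit of that band (so it cannot correspond to $\one$ in any function-space representation), and because ``$x\in X^a$ forces order continuity of the band generated by $x$'' is false: $(1/n)_n$ lies in $c_0=(\ell^\infty)^a$ yet generates the band $\ell^\infty$. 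The paper instead takes $B$ to be the band generated by $x$ alone, represents it as a Banach function space with $x\leftrightarrow\one\in B^a$ (using that the norm-closed \emph{ideal} generated by $x$ is order continuous), applies the special case inside $B$, and then handles the leftover component of $L_{x,y}$ in $B^d$ --- the one-dimensional space spanned by $(I-P)y$ --- via the explicit correction term $\lambda(y-y\wedge nx)$. Your proposal is silent on this leftover component, and without accounting for it the general case does not follow from the function-space case.
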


\begin{proof}
We first prove the special case. Suppose $\one\in X^a$. In view of
Theorem~\ref{uo-closure}, it suffices
to prove $\overline{O_f}^o = \overline{O_f}^{uo}$, or equivalently,
$\overline{O_f}^{uo} \subset \overline{O_f}^{o}$, since the reverse inclusion is
clear.
Take any $g\in  \overline{O_f}^{uo}$. Without loss of generality, assume $g\geq
0$.
By Remark~\ref{uo-cl-re}\eqref{uo-cl-re1}, $g$ is the a.s.-limit of a sequence
in $O_f$.
For each $n\in \N$, let $g_n = g\wedge n\one$.  Clearly, each $g_n$ is a bounded
function in $X_+$ and is  the a.s.-limit of a sequence in $O_f$.
By Lemma~\ref{split}, we find $h_n^1, h_n^2\in X_+$ and a set $A_n\in
\Sigma$ such that $\mathbb{P}(\Omega\backslash A_n) \leq \frac{1}{2^n}$,  $\supp
h_n^1\subseteq  A_n$, $$\supp
h^2_n\subseteq \Omega\backslash A_n,\;\;
\norm{(g_n-h_n^1)\one_{A_n}}_\infty <\frac{1}{2^n},\eqno(1)$$
$\|h^2_n\|_X < \frac{1}{2^n}$
and $h_n = h_n^1 + h_n^2 \in O_f$.
Let $B_n = \{g\leq n\}\bigcap (\cap^\infty_{m=n}A_m)$.
Then by (1),
\[ \|(g-h_n)\one_{B_n}\|_\infty \leq
\|(g_n-h_n)\one_{A_n}\|_\infty=\norm{(g_n-h_n^1)\one_{A_n}}_\infty\leq
\frac{1}{2^n}.\eqno(2)\]
Since $B_n\uparrow$ and $\mathbb{P}(B_n)\rightarrow1$, it follows from (2) that
$h_n\xrightarrow{a.s.}g$.
Since $\supp
h_n^1\subseteq  A_n$, we have $0\leq h_n^1 \leq g + \one \in X$ by (1). Since $h: =
\sum_n h^2_n$ converges in
$X$, it follows that $0\leq h_n \leq g+\one+ h\in X$ for all $n$, so that $(h_n)$
is order bounded in $X$.
Therefore, $h_n \stackrel{o}{\to}g$ and  $g\in \overline{O_f}^{o}$. This proves
the special case.

For the general case, assume $0<x\in X^a$ and $y>0$. Let $B$ and $I$ be
the band and norm closed ideal generated by $x$, respectively. 
Since $I\subset X^a$, $I$ is an order continuous
Banach lattice. 
Thus we can regard $I$ as an ideal over some probability space
$(\Omega,\Sigma,\mathbb{P})$ with
$x$ corresponding to $\one$ (\cite[Theorem~1.b.14]{LZ:79}).
Clearly, $L^0(\Sigma)$ is the universal completion of $I$, and since $I$ is
order dense in $B$, we can view $B$ as an order dense sublattice of
$L^0(\Sigma)$ (\cite[Theorem~23.21]{AB:78}). Using order denseness of $B$ in
$L^0(\Sigma)$, $\sigma$-order completeness of $B$ and the countable sup property
of
$L^0(\Sigma)$, it is straightforward to verify that $B$ is order complete, and
thus is an ideal of $L^0(\Sigma)$ (\cite[Theorem~2.2]{AB:78}).
Therefore, $B$ is a Banach function space over $(\Omega,\Sigma,\mathbb{P})$,
and $\one=x\in B^a$.

Suppose now $z\in \overline{\overline{L_{x,y}}^o}^o$. Without loss of
generality, assume $z\geq 0$. Let $P$ be the band projection from $X$ onto $B$.
Since $P$ is a lattice homomorphism,
$P(L_{x,y})=L_{Px,Py}=L_{x,Py}=L_{\one, Py}=O_{Py}$. Moreover, since $P$ is
order continuous,
it follows that $$Pz\in
\overline{\overline{P(L_{x,y})}^{o'}}^{o'}=\overline{\overline{O_{Py}}^{o'}}^{o'
}=\overline{O_{Py}}^{o'},$$ where $o'$ indicates the order closure is taken in
$B$, and the last equality follows from the previous case. Note that $B$ has the
countable sup property, so that we can find a positive sequence $(w_n)$ in
$O_{Py}$ such that $w_n\xrightarrow{o} Pz$ in $B$.
We may write $w_n=Pu_n$, where $0\leq u_n\in L_{x,y}$. Clearly, $(Pu_n)$ is
order bounded, say, $Pu_n\leq a$ for all $n\geq 1$ and some $a\in X_+$. 
Then it follows from
\begin{align*}
 &\abs{P(u_n\wedge nx)-Pz}\leq \abs{Pu_n-Pz}+\abs{Pu_n-P(u_n\wedge nx)}\\
=&\abs{Pu_n-Pz}+\abs{Pu_n-(Pu_n)\wedge (nx)}=\abs{Pu_n-Pz}+(Pu_n-nx)^+\\
\leq &\abs{Pu_n-Pz}+(a-nx)^+
\end{align*}
that 
$$P(u_n\wedge nx)\xrightarrow{o}Pz\mbox{ in }X.$$
Note that $I-P$ is also a lattice homomorphism and  $(I-P)x=0$. Therefore,
$(I-P)u\in \Span(I-P)y$ for any $u\in L_{x,y}$. It follows that $(I-P)z\in
\Span(I-P)y$ as well, say, $(I-P)z=\lambda (I-P)y$. Now put $z_n=u_n\wedge
nx+\lambda (y-y\wedge nx)\in L_{x,y}$. Since $y\wedge nx\uparrow y$,
$Pz_n\xrightarrow{o}Pz$. Clearly, $(I-P)z_n=\lambda (I-P)y=(I-P)z$. Hence, $z_n\xrightarrow{o} z$
in $X$, so that $z\in \overline{L_{x,y}}^o$. This proves that $
\overline{L_{x,y}}^o$ is order closed.
\end{proof}

Orlicz spaces have been used in mathematical finance and economics as a general
framework of model spaces; see, e.g., \cite{BF:10, CL:09,GLMX:17,GLX:16,GX:16}.
We state Theorem~\ref{option} in this setting. We refer to \cite[Chapter
2]{ES:02} for definitions of Orlicz functions and spaces.

\begin{corollary}
The order closure of the option space $O_f$ is order closed for every $f\geq 0$
in an Orlicz space $L^\Phi$ over a probability space.
\end{corollary}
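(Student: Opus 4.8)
The plan is to obtain the corollary as a direct instance of the final assertion of Theorem~\ref{option}. So the entire task is to check that an Orlicz space $L^\Phi$ over a probability space $(\Omega,\Sigma,\mathbb{P})$ is a $\sigma$-order complete Banach function space with $\one\in(L^\Phi)^a$; granting this, Theorem~\ref{option} applied with $X=L^\Phi$ and with $f\ge 0$ the given limited liability asset immediately gives that $\overline{O_f}^o$ is order closed.

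Most of what is needed is routine. Equipped with (say) the Luxemburg norm, $L^\Phi$ is an ideal of $L^0(\Sigma)$ carrying a complete lattice norm, hence a Banach function space in the sense of this paper; and, being an ideal of the order complete space $L^0(\Sigma)$ in which the supremum of any order bounded subset again lies in $L^\Phi$, it is order complete, in particular $\sigma$-order complete. Thus everything reduces to the membership $\one\in(L^\Phi)^a$, and here I would invoke the criterion recorded just before Lemma~\ref{split}: it is enough that $L^\Phi$ contain the constant functions and that $(\diamond)$ hold. The first is clear, since $\mathbb{P}$ is a probability measure and $\Phi$ is finite on a right neighbourhood of $0$ (convexity together with $\Phi(0)=0$ and $\Phi\not\equiv+\infty$), so that every bounded $f$ satisfies $\int_\Omega\Phi(\abs{f}/\lambda)\,d\mathbb{P}\le\Phi(\norm{f}_\infty/\lambda)<\infty$ for $\lambda$ large; in particular $\one\in L^\Phi$. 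For $(\diamond)$, a short computation with the Luxemburg norm gives $\norm{\one_A}_\Phi=1/\Phi^{-1}\bigl(1/\mathbb{P}(A)\bigr)$, where $\Phi^{-1}$ is the generalized inverse of $\Phi$; since a finite-valued Orlicz function is unbounded, $\Phi^{-1}\bigl(1/\mathbb{P}(A)\bigr)\to+\infty$ as $\mathbb{P}(A)\to 0$, whence $\norm{\one_A}_\Phi\to 0$. This yields $\one\in(L^\Phi)^a$ and finishes the deduction.

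The only non-routine step, and the one where I expect the real content to sit, is the last computation verifying $(\diamond)$; it is also the point at which the standing hypothesis that $\Phi$ is an Orlicz function in the sense of \cite[Chapter~2]{ES:02} is actually used. Indeed, were $\Phi$ allowed to take the value $+\infty$ for arguments past some finite $t_\infty$, one would get $\norm{\one_A}_\Phi\to 1/t_\infty>0$, so $\one\notin(L^\Phi)^a$ and Theorem~\ref{option} would no longer be available. Everything else is bookkeeping already subsumed in Theorem~\ref{option} and the discussion preceding Lemma~\ref{split}.
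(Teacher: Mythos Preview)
Your treatment of the finite-valued case is correct and matches the paper's argument for that case. The gap is in your last paragraph: the convention in \cite[Chapter~2]{ES:02}, which the paper adopts, \emph{does} permit $\Phi$ to take the value $+\infty$, and the paper's own proof explicitly covers that possibility. As you yourself compute, when $\Phi$ is not finite-valued one has $L^\Phi=L^\infty$ and $\one\notin(L^\infty)^a$, so Theorem~\ref{option} is unavailable and a separate argument is required.

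The paper supplies one by truncation. Given $g\in\overline{O_f}^{uo}$, Remark~\ref{uo-cl-re}\eqref{uo-cl-re1} produces a sequence $(g_n)\subset O_f$ converging a.s.\ to $g$; with $M=\norm{g}_\infty$, the sequence $(g_n\wedge M\one)\vee(-M\one)$ still lies in $O_f$ (a sublattice containing $\one$, by Lemma~\ref{subl}), is dominated by $M\one$, and converges a.s.\ to $g$, hence in order. Thus $\overline{O_f}^{uo}\subseteq\overline{O_f}^o$, whence $\overline{O_f}^o=\overline{O_f}^{uo}$ is order closed by Theorem~\ref{uo-closure}. So you located the one nontrivial point exactly, but resolved it by an appeal to the definition that the paper's stated reference does not support, rather than by supplying the missing $L^\infty$ argument.
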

\begin{proof}
If $\Phi$ is finite-valued then it is well known that $\one\in(L^\Phi)^a$, so
that Theorem~\ref{option} applies. If $\Phi$ is not finite-valued, then
$L^\Phi=L^\infty$.
If a sequence $(g_n)$ in $O_f$ converges a.s.~to some $g$, then $O_f\ni
(g_n\wedge M\one)\vee(-M\one)\xrightarrow{o}g$ in $L^\infty$, where
$M=\norm{g}_\infty$.
\end{proof}

\begin{example}There exists a Banach function space $X$ for which
$\overline{O_f}^o$ is not order closed for some $f\geq 0$. Indeed, take any
Banach function space $X'$ which is not order continuous.
Then by Theorem~\ref{o-closure}, we can find $x,y>0$ such that
$\overline{L_{x,y}}^o$ is not order closed. 
Replacing $y$ with $x+y$, we may assume that $0<x<y$. By restricting to $\supp
y$, we may assume $y>0$ a.s.
Then $X := \{\frac{f}{y}: f\in X'\}$ with the norm $\|\frac{f}{y}\|_X :=
\|f\|_{X'}$ is a Banach function space,  $\one, x/y\in X$, and
$\overline{O_{x/y}}^o$ is not order closed in $X$. 
\end{example}

Our next result says that $\overline{Y}^o$ is o-closed when $Y$ is regular.
The
following lemma is well-known and was also observed in \cite{DL:05}.

\begin{lemma}\label{o-closed-sup}
Let X be an order complete vector lattice, and Y be a sublattice of X. Then Y is
order closed in $X$ if and only if for any subset $A$ of $Y_+$, its supremum in
$X$, whenever existing, belongs to $Y$.
\end{lemma}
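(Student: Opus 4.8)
The plan is to prove Lemma~\ref{o-closed-sup} by two directions, treating the characterization as a bridge between order closure (described via nets) and the lattice-theoretic notion of closure under existing suprema in the ambient order complete vector lattice $X$.

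\medskip

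\textbf{The ``only if'' direction.} Suppose $Y$ is order closed in $X$, and let $A \subseteq Y_+$ have a supremum $s = \sup A$ in $X$. First I would reduce to the case where $A$ is upward directed: replace $A$ by the collection $\widetilde A$ of all finite suprema $a_1 \vee \cdots \vee a_n$ with $a_i \in A$. Since $Y$ is a sublattice, $\widetilde A \subseteq Y_+$; clearly $\widetilde A$ has the same upper bounds as $A$, hence $\sup \widetilde A = s$; and $\widetilde A$ is upward directed. Now, regarding $\widetilde A$ as a net indexed by itself with its natural order, the net $(a)_{a \in \widetilde A}$ increases to $s$, which in an order complete vector lattice means $a \xrightarrow{o} s$. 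Since each $a \in Y$ and $Y$ is order closed, $s \in Y$, as required.

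\medskip

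\textbf{The ``if'' direction.} Conversely, suppose every existing supremum (in $X$) of a subset of $Y_+$ lies in $Y$; we must show $Y = \overline{Y}^o$. Take $x \in \overline{Y}^o$, so $y_\alpha \xrightarrow{o} x$ for some net $(y_\alpha)$ in $Y$. Splitting into positive and negative parts via the lattice operations (which are order continuous and which $Y$ respects), I can assume $y_\alpha \in Y_+$ and $x \geq 0$; alternatively, and perhaps more cleanly, I can handle $x^+$ and $x^-$ separately using that $y_\alpha^{\pm} \xrightarrow{o} x^{\pm}$. By definition of order convergence there is a net $a_\gamma \downarrow 0$ in $X$ with $|y_\alpha - x| \leq a_\gamma$ eventually (in $\alpha$) for each $\gamma$. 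The standard trick is to produce $x$ as a supremum of a subset of $Y_+$. For each $\gamma$, fix $\alpha_0(\gamma)$ with $|y_\alpha - x| \leq a_\gamma$ for $\alpha \geq \alpha_0(\gamma)$; then $y_\alpha - a_\gamma \leq x \leq y_\alpha + a_\gamma$ for such $\alpha$. Consider $b_\alpha := \inf_{\beta \geq \alpha} y_\beta$ (which exists since $X$ is order complete and the net is eventually order bounded); one checks $b_\alpha \uparrow x$. But $b_\alpha$ need not lie in $Y$. To fix this, I instead use $(y_\alpha \wedge x)$: since $|y_\alpha \wedge x - x| = |y_\alpha \wedge x - x \wedge x| \leq |y_\alpha - x|$, we have $y_\alpha \wedge x \xrightarrow{o} x$, and moreover $y_\alpha \wedge x \leq x$, so these form an order bounded net below $x$. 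Passing to finite suprema, the set $C := \{ (y_{\alpha_1} \wedge x) \vee \cdots \vee (y_{\alpha_n} \wedge x) : n \in \N, \ \alpha_i \text{ arbitrary}\}$ is an upward directed subset of $Y_+$ (here I use $x \geq 0$, so $y_\alpha \wedge x \geq 0$ after replacing $y_\alpha$ by $y_\alpha^+$; note $y_\alpha^+ \wedge x \to x^+ \wedge x = x$ still holds when $x \geq 0$) that is bounded above by $x$, and since the net $y_\alpha \wedge x$ order converges up to $x$ through members of $C$, one gets $\sup C = x$ in $X$. By hypothesis $x \in Y$.

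\medskip

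\textbf{Main obstacle.} The routine work is all in the ``if'' direction: the genuine subtlety is that order convergence of a net does not directly give a monotone net, and the elements of an obvious monotone approximating net (like $\inf_{\beta\geq\alpha} y_\beta$) need not belong to the sublattice $Y$. The key manoeuvre is therefore to truncate against the limit, i.e.\ pass from $y_\alpha$ to $y_\alpha^+ \wedge x$, which stays in $Y_+$, stays order bounded by $x$, still order converges to $x$, and whose finite suprema form an upward directed family whose supremum is forced to be exactly $x$. Once that reduction is in place, the hypothesis applies verbatim. I should also be careful that splitting $x$ into $x^+, x^-$ and working in each ``quadrant'' is legitimate — this uses only that $Y$ is a sublattice and that lattice operations are order continuous, both already noted in the paper.
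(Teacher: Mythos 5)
Your ``only if'' direction is fine: passing to finite suprema makes $A$ upward directed without changing its upper bounds, and an increasing net with supremum $s$ order converges to $s$, so order closedness of $Y$ gives $s\in Y$. (The paper itself states this lemma without proof, citing it as well known, so there is no ``official'' argument to compare against; I am judging the proposal on its own.)

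The ``if'' direction, however, contains a genuine gap at exactly the step you flag as the key manoeuvre. You truncate against the limit and claim that the set $C$ of finite suprema of the elements $y_\alpha\wedge x$ is a subset of $Y_+$. It is not: $x$ is the point whose membership in $Y$ is being proved, and $Y$ is only a sublattice, not an ideal, so $y\wedge x$ need not lie in $Y$ even for $y\in Y_+$ and $0\le x\le y$. For instance, take $X=\R^2$, $Y=\{(t,t):t\in\R\}$, $x=(1,0)$ and $y=(2,2)\in Y_+$; then $y\wedge x=(1,0)\notin Y$. Since $C\not\subseteq Y_+$, the hypothesis cannot be applied to $C$, and the argument collapses. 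Ironically, the object you discard as ``need not lie in $Y$'', namely $b_\alpha=\inf_{\beta\ge\alpha}y_\beta$, is the one that does work: the stated hypothesis on suprema of subsets of $Y_+$ in fact forces $Y$ to be closed under existing infima of subsets of $Y_+$ as well. Indeed, if $A\subseteq Y_+$ has infimum $t$ in $X$, fix $a_0\in A$ and apply the hypothesis to $\{(a_0-a)^+ : a\in A\}\subseteq Y_+$, whose supremum in $X$ is $a_0-t$; hence $t\in Y$. (Equivalently, translate and negate to reduce infima to suprema.) With that in hand, after passing to a tail so that $(y_\alpha)\subseteq Y_+$ is order bounded, each $b_\alpha=\inf_{\beta\ge\alpha}y_\beta$ lies in $Y_+$, the family $\{b_\alpha\}$ increases, and $\sup_\alpha b_\alpha=x$ because $\liminf_\alpha y_\alpha=x$ for an order convergent order bounded net in an order complete space; one final application of the hypothesis gives $x\in Y$. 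So the statement is true and your overall architecture is right, but the proof needs the intermediate closure-under-infima step rather than the $y_\alpha\wedge x$ truncation.
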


\begin{theorem}\label{o-closure-reg}
Let $X$ be a vector lattice and $Y$ be a regular sublattice of $X$. Then 
$\overline{Y}^{uo}=\overline{Y}^{o}$, and both are order closed.
Moreover, if $0<x\in\overline{Y}^{o}$, then there exists a net $(y_\alpha)$
in
$Y_+$ such that $y_\alpha\uparrow x$ in $X$.
\end{theorem}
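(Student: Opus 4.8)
The plan is to reduce to the case where $X$ is order complete, isolate one claim that drives the whole statement, and then pinpoint the step that genuinely uses regularity as the main obstacle.

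First I would pass to the order completion $X^\delta$. Since $X$ is a regular (indeed order dense) sublattice of $X^\delta$ and regularity is transitive, $Y$ is a regular sublattice of $X^\delta$; moreover order convergence in $X$ implies order convergence in $X^\delta$, a net in $X$ is uo-null in $X$ iff it is uo-null in $X^\delta$, and an increasing net in $Y_+$ whose supremum in $X^\delta$ happens to lie in $X$ has that same element as its supremum in $X$. With these compatibilities, once the three assertions of the theorem are proved for order complete vector lattices they descend to the general case (the increasing-net property and $\overline{Y}^{o}=\overline{Y}^{uo}$ transfer down directly, and order-closedness of $\overline{Y}^{o}$ in $X$ then follows because $\overline{Y}^{o}\subseteq\overline{Y}^{o,X^\delta}$ and the latter is order closed in $X^\delta$). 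So assume from now on that $X$ is order complete.

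Everything then follows from the single claim: \emph{if $0\le x\in\overline{Y}^{uo}$, then $\sup(Y_+\cap[0,x])=x$.} Granting this, the set $D:=Y_+\cap[0,x]$ is a sublattice intersected with an order interval, hence upward directed, and, regarded as a net, it increases to $x$; this is the ``moreover'' part and it also shows $x\in\overline{Y}^{o}$, so that $\overline{Y}^{o}=\overline{Y}^{uo}$ by Lemma~\ref{closures}. For order-closedness of $\overline{Y}^{o}$ I would invoke Lemma~\ref{o-closed-sup}: given $A\subseteq(\overline{Y}^{o})_+$ whose supremum $w$ exists in $X$, close $A$ under finite suprema to make it directed, apply the claim to each $a\in A$ (using $\overline{Y}^{o}=\overline{Y}^{uo}$) to write $a=\sup(Y_+\cap[0,a])$, and observe that $\bigcup_{a\in A}(Y_+\cap[0,a])$ is a directed subset of $Y_+$ with supremum $w$; hence $w\in\overline{Y}^{o}$.

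To prove the claim, fix $(y_\alpha)\subseteq Y_+$ with $y_\alpha\xrightarrow{uo}x$. The first move is that $y_\alpha\wedge x\xrightarrow{o}x$: indeed $x-y_\alpha\wedge x=(x-y_\alpha)^{+}\le|y_\alpha-x|\wedge x\xrightarrow{o}0$. Fixing a net $a_\gamma\downarrow 0$ with $x-y_\alpha\wedge x\le a_\gamma$ whenever $\alpha\ge\alpha_0(\gamma)$ gives $y_\alpha\ge(x-a_\gamma)^{+}$ for such $\alpha$, hence $(x-a_\gamma)^{+}\le e_\gamma:=\inf\{y_\alpha:\alpha\ge\alpha_0(\gamma)\}$, where each $e_\gamma$ is the infimum of a downward directed net in $Y_+$; since $(x-a_\gamma)^{+}\uparrow x$, also $e_\gamma\wedge x\uparrow x$. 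Thus $x$ is reached from below, inside $[0,x]$, by elements assembled from the $y_\alpha$. The main obstacle is the last step: passing from these approximants, which lie in $X$ but not in $Y$, to genuine members of $Y$ below $x$ — equivalently, showing that $(x-a_\gamma)^{+}$ (equivalently $e_\gamma\wedge x$) is dominated by $\sup(Y_+\cap[0,x])$. This is exactly where the regularity of $Y$, rather than mere membership in a vector lattice, must be used; the relevant tool is the coincidence of order/uo-null nets in $Y$ with those in $X$ (\cite[Theorem~3.2]{GTX:16}), applied to the decreasing nets in $Y_+$ that define the $e_\gamma$. Equivalently, the claim amounts to the assertion that $\overline{Y}^{o}$ is the Dedekind completion of $Y$ realized inside $X$ (which is automatically order complete, and inside which $Y$ is order dense), and it is this identification that I expect to require the most care.
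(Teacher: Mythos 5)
Your overall architecture is sound and in fact parallels the paper's: reduce to an order complete ambient space, prove that every $0\le x$ in the closure is the supremum of an increasing net from $Y_+$, and then deduce order closedness via Lemma~\ref{o-closed-sup} by taking unions of such approximating families. The central claim you isolate, $\sup\bigl(Y_+\cap[0,x]\bigr)=x$ for $0\le x\in\overline{Y}^{uo}$, is indeed equivalent to the whole theorem. But you do not prove it. Your argument produces the approximants $(x-a_\gamma)^+$ and $e_\gamma=\inf\{y_\alpha:\alpha\ge\alpha_0(\gamma)\}$, which lie in $X$ (the tail is not even downward directed, so you must first close it under finite infima, and the resulting infimum need not belong to $Y$ since $Y$ is not order complete), and then you stop exactly at the point where elements of $Y_+$ dominated by $x$ must be manufactured, saying that this is ``where regularity must be used'' and that you ``expect it to require the most care.'' That is the entire content of the theorem; without it nothing is established. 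The tool you gesture at (the coincidence of uo-null nets in $Y$ and in $X$) does not by itself bridge the gap from $e_\gamma\in X$ to members of $Y_+\cap[0,x]$.

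The missing idea in the paper is to complete $Y$ as well as $X$: by \cite[Theorem~2.10]{GTX:16} the order completion $Y^\delta$ is a regular sublattice of $X^\delta$, so after reducing to that setting the tail infima $z_\alpha=\inf_{\beta\ge\alpha}y_\beta$ exist in $Y^\delta$, agree with the infima computed in $X^\delta$ (regularity applied to the downward directed net of finite infima), and satisfy $z_\alpha\uparrow x$. This already gives the claim when $Y$ is order complete. To descend to general $Y$, one uses order denseness of $Y$ in $Y^\delta$ to write each $z_\alpha=\sup A_{z_\alpha}$ with $A_{z_\alpha}\subseteq Y_+\cap[0,z_\alpha]$, this supremum persisting in $X^\delta$ by regularity of $Y^\delta$; the union of the $A_{z_\alpha}$, closed under finite suprema, is then an increasing net in $Y_+\cap[0,x]$ with supremum $x$. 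Some such two-level completion-and-descent argument (or an equivalent device) is required; as written, your proposal identifies the obstacle but leaves it unresolved.
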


\begin{proof}
First assume that $X$ and $Y$ are both order complete.
For any $0\leq x\in \overline{Y}^{o}$, take $(y_\alpha)$ in $Y_+$ such that $
y_\alpha\xrightarrow{o}x$ in $X$. We may assume that
$(y_\alpha)$ is order bounded in $X_+$. Then $x=\sup_\alpha\inf_{\beta\geq
\alpha}y_\beta$, where the inf and sup are taken in $X$. Note that the infimum
of $(y_\beta)_{\beta\geq \alpha}$ exists in $Y$ by order completeness of $Y$,
and equals the infimum of $(y_\beta)_{\beta\geq \alpha}$ in $X$, by regularity
of $Y$. Put $z_\alpha=\inf_{\beta\geq \alpha}y_\beta$.
Then $$(z_\alpha)\subset  Y_+,\;\sup z_\alpha=x,\eqno(\circ)$$ 
where the supremum is taken in $X$.
Now pick any subset $A$ of $ (\overline{Y}^o)_+$ which has a supremum $x$ in
$X$. For any
$a\in A$, we can find, by $(\circ)$, a set $A_a$ in $Y_+$ such that $\sup A_a=a$
in $X$. It
is clear that $\sup\cup_{a\in A}A_a=x$ in $X$. Adding finite suprema to
$\cup_{a\in A}A_a$, we obtain a net in $Y_+$ which increases to $x$ in
$X$, whence $x\in \overline{Y}^{o}$. This proves that $\overline{Y}^o$ is
order closed in $X$ by Lemma~\ref{o-closed-sup}.

In general, by \cite[Theorem~2.10]{GTX:16},  the order
completion $Y^\delta$ of $Y$ is a regular sublattice of the order completion
$X^\delta$ of $X$. Therefore, by the preceding case, the order closure 
$\overline{Y^\delta}^o$ of $Y^\delta$ in $X^\delta$ is an order
closed sublattice in $X^\delta$. We claim that $$\overline{Y^\delta}^{o} \cap
X=\overline{Y}^o.$$
For any $x\in \overline{Y}^o$, there exists a net $(y_\alpha)$ in $Y$ such that
$y_\alpha \xrightarrow{o}x$ in $X$.  
By \cite[Corollary~2.9]{GTX:16}, we have
$y_\alpha \xrightarrow{o}x$ in $X^\delta$, and therefore, $x\in
\overline{Y^\delta}^{o}$. It follows that $\overline{Y}^{o}\subset
\overline{Y^\delta}^{o} \cap X$.
Conversely, pick $x\in  \overline{Y^\delta}^{o} \cap X$. Without loss of
generality, assume $x\geq 0$. By $(\circ)$, we can find a subset $A\subset
(Y^\delta)_+$ such that $x=\sup A$ in $X^\delta$. By order denseness of $Y$ in
$Y^\delta$, for each $a\in A$, we can find $A_a\subset Y_+$ such that $a=\sup
A_a$ in $Y^\delta$, and therefore, in $X^\delta$, by regularity of $Y^\delta$ in
$X^\delta$. It follows that $\cup_{a\in A}A_a$ is a subset of $Y_+$ and its
supremum equals $x$ in $X^\delta$, and therefore, in $X$.
Adding finite suprema into  $\cup_{a\in A}A_a$ yields a net in $Y_+$ which
increases  to $x$ in $X$; hence, $x\in
\overline{Y}^o$. 
This proves the claim.
Finally, let $(y_\alpha)$ be a net in $\overline{Y}^o$ and $x\in X$ such that
$y_\alpha\xrightarrow{o}x$ in $X$. Then $(y_\alpha)\subset
\overline{Y^\delta}^o$ by the claim, and by $y_\alpha\xrightarrow{o} x$ in
$X^\delta$, it follows from order closedness of $\overline{Y^\delta}^o$ that
$x\in \overline{Y^\delta}^o$. Therefore, $x\in \overline{Y}^o$, by the claim
again. This proves that $ \overline{Y}^o$ is order closed.
\end{proof}

\section{Measurability}\label{measurability}

In \cite{DL:05,DL:05b}, Luxemburg and de Pagter related order closed sublattices
to
measurability in vector lattices. Using their result, we can provide another
approach to obtain smallest order closed sublattices. 
We first recall some
definitions from \cite{DL:05}.
Let $X$ be an order complete vector lattice with a weak unit $u>0$. 
The set $C_u$ collects all components of $u$, i.e., all $x\in X$ such that
$(u-x)\wedge x=0$.
A subset
$\mathscr{F}$ of $C_u$ is called a \emph{complete Boolean subalgebra
of $C_u$} if $0\in \mathscr{F}$, $u-a\in\mathscr{F}$ for any $a\in \mathscr{F}$,
 and $\sup \mathscr{C}\in\mathscr{F}$ for any subset $\mathscr{C}$ of
$\mathscr{F}$. 
For such $\mathscr{F}$, a vector $x \in X$ is said to be \emph{measurable with
respect to $\mathscr{F}$}  if $P_{(\lambda u-x)^+}u=\sup_{n\ge 1}(n(\lambda
u-x)^+ )\wedge u \in \mathscr{F}$ for all $\lambda \in \mathbb{R}$. 
Denote by $L_0(\mathscr{F})$ the collection of all elements in $X$ that are measurable
with
respect to $\mathscr{F}$. For a subset $A$ of $ X$, denote by $\sigma(A)$ the
intersection of all
complete Boolean subalgebras of $C_u$ with respect to which each $a\in A$ is
measurable; clearly, it is the smallest such complete Boolean subalgebra of
$C_u$.

\begin{example}
Given a probability space $(\Omega,\Sigma,\mathbb{P})$, for a
$\sigma$-subalgebra $\mathscr{A}$ of $\Sigma$, put
$\mathscr{F}=\{\one_F:F\in\mathscr{A}\}$. Then a simple application of the
countable sup property of $L_0(\Sigma)$
yields that $\mathscr{F}$ is a complete Boolean
subalgebra of $C_\one$ in $L_0( \Sigma )$. Conversely, if $\mathscr{F}$
is a complete Boolean
subalgebra of $C_\one$ in $L_0( \Sigma )$, then
$\mathscr{A}:=\{F\in\Sigma: \one_F \in \mathscr{F}\}$ is a $\sigma$-subalgebra
of $\Sigma$.
\end{example}

\begin{theorem}[{\cite{DL:05}}]\label{dl-theorem}
Let $X$ be an order complete vector lattice with a weak unit $u>0$, and
$\mathscr{F}$ be a complete Boolean subalgebra of $C_u$.
Then ${L}_0(\mathscr{F})$ is an order closed sublattice of $X$.
\end{theorem}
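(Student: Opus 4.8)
The plan is to verify directly, using the characterization in Lemma~\ref{o-closed-sup}, that $L_0(\mathscr{F})$ is an order closed sublattice of $X$. There are three things to check: (a) $L_0(\mathscr{F})$ is a linear subspace; (b) it is a sublattice; (c) it is order closed. For every statement I will use the defining condition that $x\in L_0(\mathscr{F})$ iff $P_{(\lambda u-x)^+}u\in\mathscr{F}$ for all $\lambda\in\mathbb{R}$, together with the closure properties of $\mathscr{F}$ (containing $0$, closed under complementation $a\mapsto u-a$, and closed under arbitrary suprema, hence also under arbitrary infima and countable operations).

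First I would record the elementary identities governing the band projections $P_{(\lambda u-x)^+}u$ as $\lambda$ varies. The key observations are: as $\lambda\uparrow$, the components $P_{(\lambda u-x)^+}u$ increase, and $\sup_{\lambda}P_{(\lambda u-x)^+}u=u$, $\inf_\lambda P_{(\lambda u-x)^+}u=0$; moreover for fixed $x$ the component $P_{(\lambda u-x)^+}u$ is left-continuous in $\lambda$ in the sense that $\sup_{\mu<\lambda}P_{(\mu u-x)^+}u=P_{(\lambda u-x)^+}u$ (this follows from $(\lambda u - x)^+ = \sup_{\mu<\lambda}(\mu u-x)^+$ whenever the band generated is concerned... more precisely one works with the carrier/null structure). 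Using these, one shows that the complement $u-P_{(\lambda u-x)^+}u$ equals $P_{(x-\lambda u)^+}u$ plus possibly a component where $x=\lambda u$, so that the ``sublevel'' and ``superlevel'' components both lie in $\mathscr{F}$ when $x$ does. These are exactly the spectral-type identities used in \cite{DL:05}, and I would cite them from there rather than reprove them.

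For the linear structure: $x\in L_0(\mathscr{F})$ implies $-x\in L_0(\mathscr{F})$ because $P_{(\lambda u+x)^+}u = u - P_{(-\lambda u - x)^+}u$ modulo the $\{x=-\lambda u\}$ component, and $\mathscr{F}$ is closed under complements; scalar multiplication by $c>0$ is handled by reparametrizing $\lambda\mapsto\lambda/c$, and the case $c=0$ is trivial since $0\in L_0(\mathscr{F})$. Additivity is the delicate point: to see $x+y\in L_0(\mathscr{F})$ one expresses the component $P_{(\lambda u - (x+y))^+}u$ as a countable supremum/infimum over rationals $q$ of suitable lattice combinations of the components $P_{(q u - x)^+}u$ and $P_{((\lambda-q) u - y)^+}u$, exploiting that $\{x+y < \lambda\}$ is, componentwise, $\bigcup_{q\in\mathbb{Q}}(\{x<q\}\cap\{y<\lambda-q\})$. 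Since $\mathscr{F}$ is closed under countable suprema and finite infima, this places $P_{(\lambda u-(x+y))^+}u$ in $\mathscr{F}$. The sublattice property is then easy: $x\in L_0(\mathscr{F})$ gives $x^+\in L_0(\mathscr{F})$ since $P_{(\lambda u - x^+)^+}u$ is either $u$ (for $\lambda\le 0$) or $P_{(\lambda u - x)^+}u$ (for $\lambda>0$), and combined with linearity this yields closure under $\vee,\wedge$.

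The main obstacle, and the step deserving the most care, is order closedness. By Lemma~\ref{o-closed-sup} it suffices to show that whenever $A\subseteq (L_0(\mathscr{F}))_+$ has a supremum $x$ in $X$, then $x\in L_0(\mathscr{F})$. The natural approach: for each $\lambda$, the superlevel component of $x=\sup A$ should be the supremum over $a\in A$ of the superlevel components of $a$, i.e. $P_{(x-\lambda u)^+}u=\sup_{a\in A}P_{(a-\lambda u)^+}u$, because $x>\lambda u$ on a component exactly when some $a>\lambda u$ there (this requires a short argument using that the $a$'s are directed upward after passing to finite suprema, which stay in $L_0(\mathscr{F})$ by the sublattice property). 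Each $P_{(a-\lambda u)^+}u\in\mathscr{F}$, and $\mathscr{F}$ is closed under arbitrary suprema, so $P_{(x-\lambda u)^+}u\in\mathscr{F}$; then taking complements (and handling the boundary component $\{x=\lambda u\}$ via a limit over $\mu\downarrow\lambda$ using left-continuity) gives $P_{(\lambda u - x)^+}u\in\mathscr{F}$ for all $\lambda$. Hence $x\in L_0(\mathscr{F})$, completing the proof. I would cite \cite{DL:05} for the precise componentwise spectral identities so as not to reprove them here.
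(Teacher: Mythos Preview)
The paper does not give its own proof of this theorem: it is stated as a result of Luxemburg and de Pagter \cite{DL:05} and is used without proof. There is therefore nothing in the paper to compare your proposal against.

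That said, your sketch is essentially the standard argument (and is in the spirit of what one finds in \cite{DL:05}): verify linearity and the lattice property via spectral identities for the components $P_{(\lambda u-x)^+}u$, and then deduce order closedness from Lemma~\ref{o-closed-sup} by showing that for a directed set $A\subseteq L_0(\mathscr{F})_+$ with $x=\sup A$ one has $P_{(x-\lambda u)^+}u=\sup_{a\in A}P_{(a-\lambda u)^+}u\in\mathscr{F}$. The last identity follows cleanly from $(a-\lambda u)^+\uparrow (x-\lambda u)^+$ and the formula $P_yu=\sup_n(ny\wedge u)$ together with the infinite distributive law; your write-up gestures at this but could state it explicitly. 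One small slip: for $\lambda\le 0$ you assert $P_{(\lambda u-x^+)^+}u=u$, but in fact $(\lambda u-x^+)^+=0$ so the projection is $0$; this is harmless since $0\in\mathscr{F}$ as well.
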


The next proposition is a more precise version of Theorem~\ref{dl-theorem}.

\begin{proposition}\label{ocs}
Let $X$ be an order complete vector lattice with a weak unit $u>0$, and $A$ be a
subset of $X$. Then
${L}_0(\sigma(A))$ is the smallest order closed sublattice containing $A$ and
$u$.
\end{proposition}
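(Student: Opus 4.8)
The plan is to show two things: (i) $L_0(\sigma(A))$ is an order closed sublattice containing $A$ and $u$; and (ii) any order closed sublattice $Z$ containing $A$ and $u$ must contain $L_0(\sigma(A))$. Part (i) is essentially Theorem~\ref{dl-theorem} (taking $\mathscr{F}=\sigma(A)$): $L_0(\sigma(A))$ is an order closed sublattice of $X$, it contains $u$ since $u$ is trivially measurable with respect to every complete Boolean subalgebra (the sets $P_{(\lambda u-u)^+}u$ are either $0$ or $u$), and it contains each $a\in A$ by the very definition of $\sigma(A)$ as the intersection of all complete Boolean subalgebras with respect to which each member of $A$ is measurable. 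So the content is entirely in (ii): minimality.

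\textbf{Reducing minimality to a statement about Boolean algebras.} For (ii), let $Z$ be an order closed sublattice of $X$ with $A\cup\{u\}\subseteq Z$. The key idea is to associate to $Z$ a complete Boolean subalgebra of $C_u$, namely $\mathscr{F}_Z:=C_u\cap Z$, and to argue two points: first, that $\mathscr{F}_Z$ really is a complete Boolean subalgebra of $C_u$; second, that $L_0(\mathscr{F}_Z)\subseteq Z$. Granting these, since each $a\in A$ and $u$ lie in $Z$, and since (as one checks) $x\in Z$ implies $x$ is measurable with respect to $\mathscr{F}_Z$ — because the components $P_{(\lambda u-x)^+}u=\sup_n (n(\lambda u-x)^+)\wedge u$ are suprema of elements of the sublattice $Z$, hence lie in $Z$ by order closedness (using Lemma~\ref{o-closed-sup}), and are visibly components of $u$ — it follows that each $a\in A$ is measurable with respect to $\mathscr{F}_Z$, so $\sigma(A)\subseteq\mathscr{F}_Z$ by minimality of $\sigma(A)$, and therefore $L_0(\sigma(A))\subseteq L_0(\mathscr{F}_Z)\subseteq Z$. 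That $\mathscr{F}_Z$ is a complete Boolean subalgebra of $C_u$ is routine: $0\in Z$; if $a\in\mathscr{F}_Z$ then $u-a=(u-a)\wedge u\in Z$ since $u,a\in Z$ and $Z$ is a sublattice, and $u-a$ is again a component of $u$; and if $\mathscr{C}\subseteq\mathscr{F}_Z$ then $\sup\mathscr{C}$ (which exists in $X$ and is again a component of $u$ by standard facts) lies in $Z$ by Lemma~\ref{o-closed-sup} applied to the order closed sublattice $Z$.

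\textbf{The main obstacle.} The crux is the inclusion $L_0(\mathscr{F}_Z)\subseteq Z$, i.e., recovering an $\mathscr{F}_Z$-measurable vector from the components of $u$ it generates, all inside $Z$. The natural approach is a "simple function approximation": given $0\le x\in L_0(\mathscr{F}_Z)$, for each $\lambda\in\mathbb{R}$ the component $e_\lambda:=P_{(\lambda u-x)^+}u=\sup_n(n(\lambda u-x)^+)\wedge u$ belongs to $\mathscr{F}_Z\subseteq Z$; these are the "spectral projections" of $x$ relative to $u$. One then writes $x$ (assuming first $0\le x\le Nu$, then using a standard truncation/normalization reduction, e.g.\ passing to $x\wedge Nu$ and letting $N\to\infty$, or rescaling) as an order limit — in fact an increasing limit — of finite linear combinations $\sum_i \lambda_i (e_{\lambda_{i+1}}-e_{\lambda_i})$ over finer and finer partitions of an interval; this is the Freudenthal spectral theorem applied in the principal band generated by $u$, where $u$ is a strong unit. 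Each such finite combination lies in $Z$ because $e_\lambda\in Z$ and $Z$ is a linear sublattice; the approximating net increases to $x$ in $X$; hence $x\in Z$ by order closedness of $Z$ (Lemma~\ref{o-closed-sup} again). The subtle points to get right are: the exact form of the Freudenthal-type representation with the $e_\lambda$'s as stated in the paper's notation, checking that the relevant suprema are taken in $X$ and stay within the band generated by $u$ where everything is order bounded, handling general (not just order-bounded) $x\in L_0(\mathscr{F}_Z)$ via truncations $x\wedge Nu\uparrow x^+$ together with the negative part, and verifying these truncations are themselves $\mathscr{F}_Z$-measurable so the argument applies to them. Once the Freudenthal representation is in hand, the rest is bookkeeping.
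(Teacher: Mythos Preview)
Your proposal is correct and follows essentially the same route as the paper: form the complete Boolean subalgebra $\mathscr{F}_Z=C_u\cap Z$, show each $a\in A$ is $\mathscr{F}_Z$-measurable so that $\sigma(A)\subseteq\mathscr{F}_Z$, and then use a Freudenthal-type spectral approximation to land back in $Z$. The only cosmetic differences are that the paper applies the approximation directly to $x\in L_0(\sigma(A))$ (citing \cite[Proposition~2.6]{DL:05} for a sequence in $\Span(\sigma(A))$ order converging to $x$, which already lies in $Z$ since $\sigma(A)\subseteq Z$), whereas you route through the intermediate inclusion $L_0(\sigma(A))\subseteq L_0(\mathscr{F}_Z)\subseteq Z$ and sketch the spectral approximation by hand.
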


\begin{proof}
It is clear that $u\in \sigma(A)\subset L_0(\sigma(A))$.
Since each $a\in A$ is measurable with respect to $\sigma(A)$,  it is also
immediate that $A\subset L_0(\sigma(A))$. 
Now let $Y$ be an order closed sublattice of $X$ containing $A$ and $u$. 
Put $\mathscr{F}=Y \cap {C}_u$. We first claim that $\mathscr{F}$ is a complete
Boolean subalgebra of $C_u$.  Indeed, it is clear that $\mathscr{F}\subset
{C}_u$, $0\in \mathscr{F}$, and if $v\in \mathscr{F}$ then $u-v\in
\mathscr{F}$. Now if $\mathscr{C}\subset \mathscr{F}$, then the supremum of
$\mathscr{C}$ in $X$ belongs to $Y$ by Lemma~\ref{o-closed-sup}.
By \cite[Theorem~1.49]{AB:06},  the supremum of $\mathscr{C}$ in $X$
also belongs to ${C}_u$, and therefore,  to $\mathscr{F}$. This
proves the claim. Next, we show that $\sigma(A) \subset
\mathscr{F}$, or equivalently, that each $a\in A$ is measurable with
respect to $\mathscr{F}$. Indeed, for any $a \in A$, any $\lambda\in\mathbb{R}$
and any $n\geq 1$, we have $n(\lambda u-a)^+ \wedge u\in Y$. Therefore, their
supremum in $X$, over all $n\in \N$, also belongs to $Y$, by
Lemma~\ref{o-closed-sup}. Note that this supremum is simply $P_{(\lambda u-a)^+}
u$; we thus obtain that $P_{(\lambda u-a)^+ }u\in Y\cap
{C}_u=\mathscr{F}$. This proves that $a$ is measurable with respect to
$\mathscr{F}$, as desired.
Finally, for any $x\in L_0(\sigma(A))$, by \cite[Proposition~2.6]{DL:05}, there
exists a sequence $(x_n)$ in $\Span(\sigma(A))$ such that $x_n\xrightarrow{o} x$ in $X$. Since $\sigma(A) \subset \mathscr{F} \subset Y $, we have $x_n \in
Y$ for each $n$. It follows from order closedness of $Y$ that $x\in Y$. Hence,
$L_0(\sigma(A))\subset Y$.
\end{proof}

A combination of Proposition~\ref{ocs} and Theorem~\ref{uo-closure} (cf.~ also Remark~\ref{uo-cl-re})  immediately
gives $(*)$ in the Introduction. 
In fact, using Proposition~\ref{ocs} and
Theorem~\ref{option}, we obtain the following strong spanning power of options.

\begin{corollary}\label{option-strong}
Let $X$ be a Banach function space over a probability space such that $\one\in
X^a$, and let $f\in X_+$.
Then $X(\sigma(f))=\overline{O_f}^o$.
\end{corollary}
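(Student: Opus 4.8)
The plan is to identify $\overline{O_f}^o$ as the smallest order closed sublattice of a suitable order complete vector lattice containing $O_f$, and then match this against $X(\sigma(f))$ via Proposition~\ref{ocs}. First I would pass to the order completion $X^\delta$ (or, more conveniently, to the universal completion / $L^0(\Sigma)$ in which $X$ sits as an ideal), so that Proposition~\ref{ocs} applies. The weak unit to use is $u = \one$, which lies in $X^a \subseteq X$ by hypothesis. With this setup, Proposition~\ref{ocs} (applied with $A = \{f\}$, so that $\sigma(A) = \sigma(f)$ in the Boolean-algebra sense, which by the Example in Section~\ref{measurability} corresponds exactly to the $\sigma$-subalgebra generated by $f$) tells us that $L_0(\sigma(f))$ is the smallest order closed sublattice containing $\one$ and $f$. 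Since $O_f = L_{\one, f} \supseteq L_{f,\one}$ is itself a sublattice containing $\one$ and $f$ (Lemma~\ref{subl}, noting $O_f = \Span\{(f-k\one)^+,(k\one-f)^+\}$ and that replacing $f$ by $f$ and the unit by $\one$ is exactly the Brown–Ross form), the smallest order closed sublattice containing $O_f$ is the same as the smallest one containing $\{\one, f\}$, hence equals $L_0(\sigma(f))$. One must also check that $L_0(\sigma(f)) \cap X$, computed inside $X$, is $X(\sigma(f))$ as the corollary states it — this is the routine identification of the order-theoretic notion of $\sigma(f)$-measurability with ordinary measurability, via the Example.

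Next I would invoke Theorem~\ref{option}: under the hypothesis $\one \in X^a$ and $X$ $\sigma$-order complete, $\overline{O_f}^o$ is order closed. By Lemma~\ref{closures}\eqref{closures1}, an order closed $\overline{O_f}^o$ is \emph{the} smallest order closed sublattice of $X$ containing $O_f$. Combining the two identifications gives $\overline{O_f}^o = X(\sigma(f))$, at least after one reconciles ``smallest order closed sublattice computed in $X$'' with ``smallest order closed sublattice computed in $X^\delta$, intersected with $X$''. The reconciliation is exactly the kind of argument already carried out in the proof of Theorem~\ref{o-closure-reg} (the claim $\overline{Y^\delta}^o \cap X = \overline{Y}^o$): order convergence of a net from $O_f$ in $X$ coincides with order convergence in $X^\delta$ (Corollary~2.9 and Corollary~2.12 of \cite{GTX:16}) once one truncates to an order-bounded tail, so the order closure does not change under the passage.

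The main obstacle I anticipate is purely bookkeeping: making sure the three different ``$\sigma(f)$''s — the $\sigma$-subalgebra of $\Sigma$ generated by $f$, the complete Boolean subalgebra $\sigma(\{f\})$ of $C_\one$ from Section~\ref{measurability}, and the sublattice $L_0(\sigma(f))$ — line up, and that $X(\sigma(f))$ as defined in the Introduction (random variables in $X$ that are $\sigma(f)$-measurable in the classical sense) really equals $L_0(\sigma(\{f\})) \cap X$. This is where the Example after Theorem~\ref{dl-theorem} does the work, but one should state explicitly that for $X$ a Banach function space with $\one \in X$, a vector $x \in X$ is measurable with respect to $\mathscr{F} = \{\one_F : F \in \mathscr{A}\}$ in the de Pagter–Luxemburg sense precisely when $x$ is $\mathscr{A}$-measurable in the classical sense — because $P_{(\lambda\one - x)^+}\one = \one_{\{x < \lambda\}}$. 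Beyond that, the corollary is a direct splice of Proposition~\ref{ocs}, Theorem~\ref{option}, and Lemma~\ref{closures}\eqref{closures1}.
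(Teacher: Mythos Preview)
Your proposal is correct and is exactly the argument the paper intends: combine Proposition~\ref{ocs} (with $u=\one$ and $A=\{f\}$) and Theorem~\ref{option}, together with Lemma~\ref{closures}\eqref{closures1} and the identification of $L_0(\sigma(\{f\}))$ with $X(\sigma(f))$ via the Example. The only superfluous step is your passage to $X^\delta$: a Banach function space, being an ideal of the order complete lattice $L^0(\Sigma)$, is already order complete, so Proposition~\ref{ocs} applies directly in $X$ and the ``reconciliation'' you flag is not needed.
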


The following is immediate by Proposition~\ref{ocs} and
Theorem~\ref{o-closure-reg}.

\begin{corollary}\label{combination}
Let X be an order complete vector lattice with a weak unit $u>0$, and $Y$ be a
regular sublattice of $X$ containing $u$. Then $x\in L_0(\sigma(Y))$ iff there
exists a net $(y_\alpha)$ in $Y$ such that $y_\alpha\xrightarrow{o}x$ in
$X$. If, in addition, $x>0$, then $(y_\alpha)$ can be chosen positive and
increasing.
\end{corollary}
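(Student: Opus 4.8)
The plan is to assemble the two cited results into the required equivalence. First note that $u\in Y$, so a sublattice of $X$ containing $Y$ automatically contains $u$; hence Proposition~\ref{ocs} (applied with $A=Y$) says precisely that $L_0(\sigma(Y))$ is \emph{the} smallest order closed sublattice of $X$ containing $Y$. On the other hand, since $Y$ is a regular sublattice, Theorem~\ref{o-closure-reg} gives $\overline{Y}^{o}=\overline{Y}^{uo}$ and that this set is order closed; invoking Lemma~\ref{closures}\eqref{closures1}, $\overline{Y}^{o}$ is then also the smallest order closed sublattice of $X$ containing $Y$. By uniqueness of the smallest such sublattice, $L_0(\sigma(Y))=\overline{Y}^{o}$.

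Granting this identification, the stated equivalence is immediate and purely definitional: $x\in L_0(\sigma(Y))$ if and only if $x\in\overline{Y}^{o}$, and the latter means exactly that there is a net $(y_\alpha)$ in $Y$ with $y_\alpha\xrightarrow{o}x$ in $X$.

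For the final refinement, assume in addition that $x>0$. Then $x\in\overline{Y}^{o}$ with $x$ strictly positive, so the ``Moreover'' clause of Theorem~\ref{o-closure-reg} yields a net $(y_\alpha)$ in $Y_+$ with $y_\alpha\uparrow x$ in $X$; in particular $y_\alpha\xrightarrow{o}x$, so this net witnesses the order convergence and is positive and increasing, as claimed.

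I do not anticipate a genuine obstacle: the entire content is the identification $L_0(\sigma(Y))=\overline{Y}^{o}$, and the only point requiring a moment's care is to confirm that Proposition~\ref{ocs}, phrased in terms of the smallest order closed sublattice ``containing $A$ and $u$,'' collapses to ``containing $Y$'' here because $u\in Y$, so that matching it against the ``smallest order closed sublattice'' description coming from Lemma~\ref{closures}\eqref{closures1} gives an equality rather than a one-sided inclusion.
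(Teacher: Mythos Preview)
Your proof is correct and follows essentially the same approach as the paper, which simply notes that the result is ``immediate by Proposition~\ref{ocs} and Theorem~\ref{o-closure-reg}.'' You have spelled out the implicit identification $L_0(\sigma(Y))=\overline{Y}^{o}$ and carefully noted why the hypothesis $u\in Y$ makes the two ``smallest order closed sublattice'' descriptions match; the additional citation of Lemma~\ref{closures}\eqref{closures1} is a harmless elaboration of what the paper leaves tacit.
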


For a complete Boolean subalgebra $\mathscr{F}$ of $C_u$, it is easily seen that
$\Span(\mathscr{F}) $ is a regular sublattice in $X$ and
$\sigma(\Span(\mathscr{F}))=\mathscr{F}$. Thus, Corollary~\ref{combination} can
be viewed as a generalization of \cite[Proposition~{2.6}]{DL:05} which is
essentially Freudenthal's Spectral Theorem.

The following is also immediate by Proposition~\ref{ocs} and extends
\cite[Lemma~2.2]{GX:17}.

\begin{corollary}\label{closed-mea}
Let X be an order complete vector lattice with a weak unit $u>0$, and $Y$ be a
sublattice of $X$ containing $u$. Then $Y$ is order closed if and only if
$Y=L_0(\sigma(Y))$.
\end{corollary}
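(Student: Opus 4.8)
The statement to prove is Corollary~\ref{closed-mea}: for $X$ an order complete vector lattice with weak unit $u > 0$ and $Y$ a sublattice containing $u$, $Y$ is order closed iff $Y = L_0(\sigma(Y))$.

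\begin{proof}[Proof proposal]
The plan is to deduce both directions directly from Proposition~\ref{ocs}, which already identifies $L_0(\sigma(Y))$ as the smallest order closed sublattice of $X$ containing $Y$ and $u$. Since $Y$ contains $u$ by hypothesis, Proposition~\ref{ocs} applies with $A = Y$, giving that $L_0(\sigma(Y))$ is the smallest order closed sublattice of $X$ containing $Y$.

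For the forward direction, suppose $Y$ is order closed. Then $Y$ is itself an order closed sublattice containing $Y$ (and $u$), so by minimality $L_0(\sigma(Y)) \subseteq Y$. On the other hand, $Y \subseteq L_0(\sigma(Y))$ always holds (again by Proposition~\ref{ocs}, since $L_0(\sigma(Y))$ contains $Y$). Hence $Y = L_0(\sigma(Y))$.

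For the reverse direction, suppose $Y = L_0(\sigma(Y))$. By Theorem~\ref{dl-theorem} (or the stronger Proposition~\ref{ocs}), $L_0(\sigma(Y))$ is an order closed sublattice of $X$, so $Y$ is order closed. This completes the proof.

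There is essentially no obstacle here: the corollary is a direct packaging of Proposition~\ref{ocs}. The only point requiring a moment's care is that the hypothesis $u \in Y$ is exactly what lets us invoke Proposition~\ref{ocs} with $A = Y$ so that the ``smallest order closed sublattice containing $Y$ \emph{and} $u$'' is simply the smallest order closed sublattice containing $Y$; without $u \in Y$ the statement would fail, since $L_0(\sigma(Y))$ always contains $u$ whereas $Y$ need not.
\end{proof}
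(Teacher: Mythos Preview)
Your proof is correct and is exactly the argument the paper has in mind: the corollary is stated as ``immediate by Proposition~\ref{ocs}'', and you have simply unpacked that immediacy, using minimality for the forward direction and Theorem~\ref{dl-theorem} for the reverse. Your closing remark about the necessity of the hypothesis $u\in Y$ is also on point.
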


\begin{example}
Let $Y$ be an order closed sublattice in a Banach function space $X$ over 
$(\Omega,\Sigma,\mathbb{P})$.
Then there exist $u\in Y_+$ and a $\sigma$-subalgebra
$\mathcal{G} $ of $\Sigma$ such that $$Y=\{g\in X:g=uh,\;
h\text{ is }\mathcal{G}\text{-}measurable\}.$$
Indeed, it is known that $X$ has a weak unit $e$. 
By the countable sup property of $X$, one can take a sequence $(g_n)$ in $Y_+$
such
that $\sup_n(g_n\wedge e)=\sup_{g\in Y_+}(g\wedge e)$ in $X$. 
Then $\sum_1^N\frac{g_n}{2^n\norm{g_n}+1}\uparrow u$ for some $u\in X$. Clearly,
$u\in Y_+$, and $\mathbb{P}(\supp g\backslash \supp u)=0$ for any $g\in
Y$. 
Thus by passing to the support of $u$, one may assume that $u$ is a weak unit of
$X$. 
By Corollary~\ref{closed-mea}, we have $Y=L_0(\sigma(Y))$, where $\sigma(Y)$ is
the complete Boolean subalgebra generated by $Y$ in $C_u$. 
Every member in $C_u$ has the form $\one_Au$ for some set $A\in \Sigma$. 
Collecting all such $A$ together for the members in $\sigma(Y)$ forms a
$\sigma$-subalgebra of $\Sigma$, which we denote by $\mathcal{G}$.
Now for each $0\leq g\in L_0(\sigma(Y))$, by \cite[Proposition~{2.6}]{DL:05}, 
there exists a sequence $(g_n) $ in $\Span\sigma(Y)$ such that $0\leq
g_n\uparrow g$ in $X$. 
Of course, $g_n=h_nu$ where $h_n$ is a simple function on $\mathcal{G}$, and
$0\leq h_n\uparrow$. Let $h=\lim_nh_n$. 
Then $h$ is measurable with respect to $\mathcal{G}$, and $g=uh$. The reverse
inclusion can be proved similarly.
\end{example}

\end{document}